\documentclass[12pt]{amsart}
\usepackage{amssymb}
\usepackage[all]{xy}

\newtheorem{theorem}{Theorem}[section]
\newtheorem{lemma}[theorem]{Lemma}
\newtheorem{prop}[theorem]{Proposition}
\newtheorem{corol}[theorem]{Corollary}
\theoremstyle{definition}
\newtheorem*{defn}{Definition}
\numberwithin{equation}{section}
\newtheorem{rmk}[theorem]{Remark}

%%%% Definitions %%%%

\def\kA{\mathcal A}	\def\kB{\mathcal B}
\def\kC{\mathcal C}	\def\kI{\mathcal I}
\def\kZ{\mathcal Z}	\def\kR{\mathcal R}

\def\bK{\mathbf K}	\def\bA{\mathbf A}
\def\bD{\mathbf D}	\def\bF{\mathbf F}
\def\bR{\mathbf R}	\def\dT{\mathfrak T}
\def\gM{\mathfrak m}	\def\gK{\mathfrak k}

\def\sG{\mathsf G}	\def\sH{\mathsf H}
\def\sN{\mathsf N}	\def\kS{\mathcal S}

\def\iff{if and only if }	\def\ds{\displaystyle}

\def\set#1{\left\{\,#1\,\right\}}
\def\setsuch#1#2{\left\{\,#1\mid #2\,\right\}}

\def\al{\alpha}	\def\be{\beta}	
\def\si{\sigma}	\def\la{\lambda}
\def\ze{\zeta}	\def\eps{\varepsilon}
\def\io{\iota}	\def\Th{\Theta}
\def\ga{\gamma}

\def\bop{\bigoplus}	\def\ti{\tilde}
\def\ito{\stackrel\sim\to}	\def\dd{\partial}
\def\+{\oplus}	\def\xx{\times}
\def\*{\otimes}	\def\ol{\overline}

\def\add{\mathop\mathrm{add}\nolimits}
\def\mor{\mathop\mathrm{Mor}\nolimits}
\def\ob{\mathop\mathrm{Ob}\nolimits}
\def\rad{\mathop\mathrm{rad}\nolimits}
\def\chr{\mathop\mathrm{char}\nolimits}

\def\Md{\mbox{-}\mathsf{Mod}}
\def\el{\mathop\mathsf{El}\nolimits}
\def\dto{\dashrightarrow}	\def\Arr{\Rightarrow}

\def\Ker{\mathop\mathrm{Ker}\nolimits}
\def\cok{\mathop\mathrm{Cok}\nolimits}
\def\id{\mathop\mathrm{id}\nolimits}
\def\tr{\mathop\mathrm{tr}\nolimits}
\def\Hom{\mathop\mathrm{Hom}\nolimits}
\def\2{^{(2)}}	\def\ola{\bar\la}
\def\lsi{\bar\si}	
\def\lA{\ol\kA}

\def\pev#1{\noindent{\it Proof} is evident#1 \qed}
\def\LA{\forall}	\def\hG{\hat\sG}
\def\op{^{\mathrm{op}}}	\def\bup{\bigcup}
\def\sbe{\subseteq}	\def\spe{\supseteq}
\def\xarr{\xrightarrow}

%%%%% End of Definitions %%%%

 \begin{document}
 \author{Yuriy A. Drozd} 
 \title{Group action on bimodule categories}
 \dedicatory{To the memolry of A.\,V.~Roiter}

 \address{Institute of Mathematics, National Academy of Sciences of Ukraine,
 Tereschenkivska 3, 01601 Kiev, Ukraine}
 \email{drozd@imath.kiev.ua}
 \keywords{categories, bimodules, group action, crossed group categories}
  \subjclass[2000]{Primary 16S35, Secondary 16G10, 16G70}
 \thanks{This research was partially supported by the INTAS Grant no. 06-1000017-9093}

 \begin{abstract}
  We consider actions of groups on categories and bimodules, the related crossed group categories and bimodules,
 and prove for them analogues of the result know for representations of crossed group algebras and categories.
 \end{abstract}

 \maketitle

 Skew group algebras arise naturally in lots of questions. In particular, the properties
 of the categories of representations of skew group algebras and, more generally,
 skew group categories have been studied in \cite{rr,dof}. On the other hand, ``\emph{matrix
 problems},'' especially, \emph{bimodule categories} play now a crucial role in the theory of
 representations \cite{cb,d1}. The situation, when a group acts on a bimodule, thus also
 on the bimodule category is also rather typical. Therefore one needs to deal with \emph{skew
 bimodules} and their bimodule categories. In this paper we shall study skew bimodules
 and bimodule categories and prove for them some analogues of the results of \cite{rr,dof}. 

 In Section \ref{s1} we recall general notions related to bimodule categories. In Section \ref{s2}
 we consider actions of groups on bimodule and bimodule categories and the arising functors.
 The main results are those of Section \ref{s3}, where we define \emph{separable actions}
 and prove that in the separable case the bimodule category of the skew bimodule is 
 equivalent to the skew category of the original one. We also consider specially the case 
 of the abelian groups, since in this case the original category can be restored from the
 skew one using the group of characters. Section \ref{s4} is devoted to the decomposition
 of objects in skew group categories, especially, to the number of non-isomorphic
 direct summands in such decompositions. We also consider the \emph{radical} and
 \emph{almost split morphisms} in the skew group categories (under the separability
 condition).

 \section{Bimodule categories}
 \label{s1}

 We recall the main definitions related to bimodule categories
 \cite{cb,d1}. We fix a commutative ring $\bK$. All categories that we consider
 are supposed to be \emph{$\bK$-categories}, which means that all sets
 of morphisms are $\bK$-modules, while the multiplication is $\bK$-bilinear.
 We denote the set of morphisms from an object $X$ to an object $Y$ in a
 category $\kA$ by $\kA(X,Y)$. A \emph{module} (more precise, a
 \emph{left module}) over a category $\kA$, or a $\kA$\emph{-module} is,
 by definition, a $\bK$-linear functor $M:\kA\to\bK\Md$, where $\bK\Md$
 denotes the category of $\bK$-modules. If $M$ is such a module,
 $x\in M(X)$ and $a\in\kA(X,Y)$, we write, as usually, $ax$ instead of $M(a)(x)$.
 Such modules have all usual properties of modules over rings. The category
 of all $\kA$-modules is denoted by $\kA\Md$. A \emph{bimodule} over a category
 $\kA$, or an $\kA$-bimodule, is, by definition, a $\bK$-bilinear functor
 $\kB:\kA\op\xx\kA\to\bK\Md$, where $\kA\op$ is the opposite category to
 $\kA$. If $x\in\kB(X,Y)$, $a:X'\to X$ (i.e. $a:X\to X'$ in $\kA\op$), $b:Y\to Y'$, 
 we write $bxa$ instead of $\kB(a,b)(x)$ (this element belongs to $\kB(X',Y')$).
 In particular, $xa$ and $bx$ denote, respectively, $\kB(a,1_Y)(x)$ and
 $\kB(1_X,b)(x)$. If a bimodule $\kB$ is fixed, we often write $x:X\dto Y$ 
 instead of $x\in\kB(X,Y)$.

 A category $\kA$ is called \emph{fully additive} if it is additive (i.e. has direct sums
 $X\+Y$ of any pair of objects $X,Y$ and a zero object $0$) and every idempotent
 endomorphism $e\in\kA(X,X)$ \emph{splits}, i.e. there is an object $Y$ and a pair
 of morphisms $\io:Y\to X$ and $\pi:X\to Y$ such that $\pi\io=1_Y$ and $\io\pi=e$. 
 Choosing an object $Y'$ and morphisms $\io':Y'\to X$ and $\pi':X\to Y'$ such that
 $\pi'\io'=1_{Y'}$ and $\io'\pi'=1-e$, we present $X$ as a direct sum $Y\+Y'$, where
 $\io$ and $\io'$ are canonical embeddings, while $\pi$ and $\pi'$ are canonical
 projections. For every $\bK$-category $\kA$ there is the smallest fully additive
 category $\add\kA$ containing $\kA$. This category is unique (up to equivalence).
 It can be identified either with the category of matrix idempotents over $\kA$ or
 with the category of finitely generated projective $\kA$-modules \cite{gr}. We
 call it the \emph{additive hull} of $\kA$. Each $\kA$-module $M$ (bimodule $\kB$)
 extends uniquely (up to isomorphism) to a module (bimodule) over the category
 $\add\kA$, which we also denote by $M$ (respectively, by $\kB$)

 If $\kB$ is an $\kA$-bimodule, a \emph{differentiation} from
 $\kA$ to $\kB$ is, by definition, a set of $\bK$-linear maps
 \[
  \dd=\setsuch{\dd(X,Y):\kA(X,Y)\to\kB(X,Y)}{X,Y\in\ob\kA},  
 \]
  satisfying the \emph{Leibniz rule}:
 \[
 \dd(ab)=(\dd a)b+a(\dd b)
 \]
 for any morphisms $a,b$ such that the product $ab$ is defined. It implies, in
 particular, that $\dd 1_X=0$ for any object $X$. Again, such a differentiation
 extends to the additive hull of $\kA$ and we denote this extension by the same
 letter $\dd$. A triple $\dT=(\kA,\kB,\dd)$, where $\kA$ is a category, $\kB$ is a
 $\kA$-bimodule and $\dd$ is a differentiation from $\kA$ to $\kB$, is called
 a \emph{bimodule triple}. If $\dT'=(\kA',\kB',\dd')$ is another bimodule triple,
 a \emph{bifunctor} from $\dT$ to $\dT'$ is defined as a pair $F=(F_0,F_1)$, 
 where $F_0:\kA\to\kA'$ is a functor, $F_1:\kB\to\kB'(F_0)$ is a homomorphism
 of $\kA$-bimodule, where $\kB'(F_0)$ is the $\kA$-bimodule obtained from $\kB'$
 by the transfer along $F_0$ (i.e. $F_1(x):F_0(X)\dto F_0(Y)$ if $x:X\dto Y$, and
 $F_1(bxa)=F_0(b)F_1(x)F_0(a)\hskip1pt$), such that
 $F_1(\dd a)=\dd'(F_0(a))$ for all $a\in\mor\kA$. As a rule, we write
 $F(a)$ and $F(x)$ instead of $F_0(a)$ and $F_1(x)$. 	

 Let $F=(F_0,F_1)$ and $G=(G_0,G_1)$ be two bifunctors from a triple
 $\dT=(\kA,\kB,\dd)$ to another triple $\dT'=(\kA',\kB',\dd')$. A \emph{morphism
 of bifunctors} $\phi:F\to G$ is defined as a morphism of functors $\phi:F_0\to G_0$
 such that
 \begin{align*}
 & \phi(Y)F_1(x)=G_1(x)\phi(X)\ \text{ for each }\ x\in\kB(X,Y),\\
 & \dd'\phi(X)=0\ \text{ for each }\ X\in\ob\kA.
 \end{align*}
 If $\phi$ is an isomorphism of functors, the inverse morphism
 is obviously a morphism of bifunctors too. Then we call $\phi$ an
 \emph{isomorphism of bifunctors} and write $\phi:F\ito G$. If such an
 isomorphism exists, we say that the bifunctors $F$ are $G$
 \emph{isomorphic} and write $F\simeq G$.
 
 We call a bifunctor $F:\dT\to\dT'$ an \emph{equivalence of bimodule triples}
 if there is such a bifunctor $G:\dT'\to\dT$ that $FG\simeq\id_{\dT'}$ and 
 $GF\simeq\id_{\dT}$, where $\id_\dT$ denotes the identity bifunctor $\dT\to\dT$.
 If such a bifunctor exists, we call the triples $\dT$ and $\dT'$
 \emph{equivalent} and write $\dT\simeq\dT'$.

 \begin{lemma}\label{l11}
 A bifunctor $F=(F_0,F_1)$ is an equivalence of bimodule triples \iff the following
 conditions hold:
 \begin{enumerate}

\item   The functor $F_0$ is \emph{fully faithful}, i.e. all induced maps
 $\kA(X,Y)\to\kA'(F_0X,F_0Y)$ are bijective.
 \item   This functor is also $\dd$-\emph{dense}, i.e. for every object
 $X'$ of the category $\kA'$ there are an object $X\in\ob\kA$ and an isomorphism
 $\al:X'\to F_0X$ such that $\dd\al=0$ .
 \item    The map $F_1(X,Y):\kB(X,Y)\to\kB'(F_0X,F_0Y)$ is bijective for any
 $X,Y\in\ob\kA$.
\end{enumerate}
 Moreover, if these conditions hold, there is a bifunctor $G:\dT'\to\dT$ and
 an isomorphism $\la:\id_{\dT'}\to FG$ such that  $GF=\id_{\dT}$ and $\la(FX)=1_{FX}$
 for all $X\in\ob\kA$.
 \end{lemma}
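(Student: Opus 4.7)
The plan is to adapt the standard proof that a fully faithful, essentially surjective functor is an equivalence, enriching the construction at every step to handle the bimodule and the differentiation data.

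Necessity comes essentially for free from the definitions. If $F$ has a quasi-inverse $G$ together with an isomorphism $\lambda:\id_{\dT'}\ito FG$, then full faithfulness of $F_0$ and bijectivity of each map $F_1(X,Y)$ follow as in the classical case by transporting morphisms along $\lambda$ and its mate on the other side. The $\partial$-density of $F_0$ is exactly what the second clause in the definition of a morphism of bifunctors, applied to $\lambda^{-1}$, supplies: the isomorphism $\lambda(X')^{-1}:X'\to F_0G_0(X')$ automatically satisfies $\partial'\lambda(X')^{-1}=0$.

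The work lies in the sufficiency direction. For each $X'\in\ob\kA'$ I would invoke (2) to choose an object $G_0(X')\in\ob\kA$ together with an isomorphism $\lambda(X'):X'\ito F_0G_0(X')$ with $\partial'\lambda(X')=0$; when $X'=F_0X$ for some $X\in\ob\kA$, I would normalise the choice to $G_0(F_0X)=X$ with $\lambda(F_0X)=1_{F_0X}$. Using (1) and (3), I then define $G_0$ on morphisms and $G_1$ on bimodule elements as the unique preimages under $F_0$ and $F_1$, respectively, of $\lambda(Y')\,(-)\,\lambda(X')^{-1}$. Functoriality of $G_0$, the bimodule-homomorphism property of $G_1$, and naturality of $\lambda$ then follow formally from these unique-lifting definitions.

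The main obstacle will be the compatibility of $G$ with differentiations, namely $G_1(\partial'a')=\partial(G_0a')$ for $a'\in\kA'(X',Y')$. I would apply $F_1$ to both sides and use the bifunctor property of $F$ to rewrite $F_1(\partial(G_0a'))=\partial'(F_0(G_0a'))=\partial'\bigl(\lambda(Y')\,a'\,\lambda(X')^{-1}\bigr)$, then expand by the Leibniz rule. The terms containing $\partial'\lambda(Y')$ vanish by the normalisation $\partial'\lambda=0$, and those containing $\partial'(\lambda(X')^{-1})$ vanish as well, since differentiating $\lambda(X')\lambda(X')^{-1}=1_{X'}$ and invoking $\partial'1=0$ forces $\partial'(\lambda(X')^{-1})=0$. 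What survives is $\lambda(Y')(\partial'a')\lambda(X')^{-1}=F_1(G_1(\partial'a'))$, and injectivity of $F_1$ completes the identification. The \emph{Moreover} clause is then automatic: the normalisation at objects in the image of $F_0$, together with faithfulness of $F_0$ and $F_1$, forces $GF=\id_{\dT}$ strictly rather than merely up to isomorphism.
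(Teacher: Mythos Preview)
Your proposal is correct and follows essentially the same construction as the paper: choose $G_0X'$ and $\lambda(X')$ via $\partial$-density (normalising on the image of $F_0$), then define $G_0$ and $G_1$ as the $F$-preimages of $\lambda(Y')(-)\lambda(X')^{-1}$. You actually spell out the verification that $G_1\partial'=\partial G_0$ via the Leibniz rule and $\partial'\lambda=\partial'(\lambda^{-1})=0$, a step the paper simply asserts; the only blemish is a harmless slip of direction in the necessity paragraph (it is $\lambda(X')$, not $\lambda(X')^{-1}$, that goes $X'\to F_0G_0X'$ and has $\partial'\lambda(X')=0$).
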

\begin{proof}
 The necessity of these conditions is evident, so we prove their sufficiency. 
 Suppose that these conditions hold. For each object $X'\in\kA'$ choose an object
 $X$ and an isomorphism $\al:X'\to F_0X$ such that $\dd a=0$, always setting $\al=1_{X'}$
 for $X'=F_0X$. Set $G_0X'=X$ and $\la(X')=\al$. For each morphism $a:X'\to Y'$ set
 $G_0a=F_0^{-1}(X,Y)(\la(Y')a\la^{-1}(X'))$, where $X=G_0X',\,Y=G_0Y'$
 (then $\la(X'):X'\ito F_0X,\ \la(Y'):Y'\ito F_0Y$). Obviously, the set
 $\{\la(X')\}$ defines an  isomorphism of functors $\la:\id\to F_0G_0$. We also define
 a homomorphism of bimodules $G_1:\kB'\to\kB(G_0)$ setting
 $G_1(x)=F_1(X,Y)^{-1}(\la(Y')x\la^{-1}(X'))$ if $x:X'\dto Y',\ 
 X=G_0X',\ Y=G_0Y'$. Then $G=(G_0,G_1)$ is a bifunctor
 $\dT'\to\dT$ and $\la$ is an isomorphism of bifunctors $\id_{\dT'}\to FG$.
 Moreover, by this construction, $GF=\id_\dT$ and $\la(FX)=1_{FX}$ for all $X$.
 \end{proof}

 Every bimodule triple $\dT=(\kA,\kB,\dd)$ gives rise to the  \emph{bimodule
 category} (or the \emph{category of representations}, or the \emph{category of elements})
 of this triple \cite{cb}. The \emph{objects} of this category are elements $\bup_X\kB(X,X)$, where
 $X$ runs through objects of the category $\add\kA$. \emph{Morphisms} from an object $x:X\dto X$ 
 to an object $y:Y\dto Y$ are such morphisms $a:X\to Y$ that $ax=ya+\dd(a)$ in $\kB(X,Y)$. 
 It is easy to see that these definitions really define a fully additive $\bK$-category
 $\el(\dT)$. The set of morphisms $x\to y$ in this category is denoted by $\Hom_\dT(x,y)$.
 If $\dd=0$, we write $\el(\kA,\kB)$ or even $\el(\kB)$ instead of $\el(\kA,\kB,\dd)$.
 Each bifunctor between bimodule triples $F:\dT\to\dT'$ gives rise to
 a functor $F_*:\el(\dT)\to\el(\dT')$, which maps an object $x$ to the object
 $F_1(x)$ and a morphism $a:x\to y$ to the morphism $F_0(a):F_1(x)\to F_1(y)$.
 As well, each morphism of bifunctors $\phi:F\to G$ induces a morphism of
 functors $\phi_*:F_*\to G_*$, which correlate an object $x\in\kB(X,X)$
 with the morphism $\phi(X)$ considered as a morphism $F(x)\to G(x)$. Obviously,
 if $\phi$ is an isomorphism of bifunctors, $\phi_*$ is an isomorphism of functors.
 Especially, if $F$ is an equivalence of bimodule triples, the functor
 $F_*$ is an equivalence of their bimodule categories.	 

 If $\kB=\kA$ and $\dd=0$, we say that the bimodule triple $\dT=(\kA,\kA,0)$ is the
 \emph{principle triple} for the category $\kA$. Obviously, a bifunctor between
 principle triples is just a functor between the corresponding categories
 and a morphism of such bifunctors is just a morphism of functors.
 The bimodule category of the principle triple for a category $\kA$ is denoted by $\el(\kA)$.

 If $\kA$ and $\kA'$ are two categories, one can consider $\kA\mbox{-}\kA'$\emph{-bimodules},
 i.e. bilinear functors $\kB:\kA\op\xx\kA'\to\bK\Md$. Actually, any such bimodule can be
 identified with a $\kA\xx\kA'$-bimodule $\ti\kB$ with $\ti\kB((X,X'),(Y,Y'))=
 \kB(X,Y')$ and $(a,a')x(b,b')=axb'$. Such bimodules are called \emph{bipartite}. 
 In particular, every $\kA$-bimodule $\kB$ defines a bipartite
 $\kA\mbox{-}\kA$-bimodule, which we denote by $\kB\2$ and call the
 \emph{double} of the $\kA$-bimodule $\kB$. Certainly, bimodules $\kB$ and $\kB\2$ are
 quite different and they define different bimodule categories. If $\kB=\kA$ the category
 $\el(\kA\2)$ coincides with the \emph{category of morphisms} of the additive hull
 $\add\kA$. 

 Further on we often identify the categories $\kA$ and $\add\kA$ and say
 ''an object (morphism) of $\kA$'' instead of ``an object (morphism) of $\add\kA$.''
 We hope that this petty ambiguity will not embarrass the reader.

 \section{Group actions}
  \label{s2}
 
 Let $\dT=(\kA,\kB,\dd)$ be a bimodule triple and $\sG$ be a group. One says that the group
 $\sG$ \emph{acts on the triple} $\dT$ if a bifunctor $T_\si:\dT\to\dT$ is defined for each
 element $\si\in G$ so that $T_1=\id_\dT$ and $T_{\si\tau}\simeq T_\si T_\tau$ for any
 $\si,\tau\in\sG$. It implies, in particular, that all $T_\si$ are equivalences.
 Further on we write $X^\si$ instead of $T_\si(X)$. We only note that according to this
 notation $X^{\si\tau}\simeq (X^\tau)^\si$. A \emph{system of factors} $\la$ for such an action 
 is defined as a set of isomorphisms of bifunctors $\la_{\si,\tau}: T_{\si\tau}\ito T_\si T_\tau$,
 which satisfy the relations: 
 \begin{equation}\label{e21}
    \la^\rho_{\si,\tau}\la_{\rho,\si\tau}=\la_{\rho,\si}\la_{\rho\si,\tau}
 \end{equation}
 for any triple of elements $\rho,\si,\tau\in\sG$, and $\la_{\si,1}=\la_{1,\si}=1$
 for any $\si\in\sG$. We omit the arguments (objects of $\kA$) in these formulae (and later
 on in analogous cases), since their values can easily be restored. Since $\la_{\si,\tau}$ is
 a morphism of bifunctors, one has $\la_{\si,\tau}:X^{\si\tau}\to(X^\tau)^\si$ and
 \begin{equation}\label{e22}
 \la_{\si,\tau} x^{\si\tau}=(x^\tau)^\si \la_{\si,\tau}
 \end{equation}
 for every morphism from $\kA$ and every element from $\kB$, and also
 $\dd\la_{\si,\tau}=0$ for all $\si,\tau$. Note also that the relations
 \eqref{e21} and \eqref{e22} imply, in particular, that
 \[
  \la^{\si}_{\si^{-1},\si}=\la_{\si,\si^{-1}}\ \text{ and }\ 
 \la_{\si,\si^{-1}}x=(x^{\si^{-1}})^\si\la_{\si,\si^{-1}}.  
 \]

 Given an action $T=\set{T_\si}$ of a group $\sG$ on a bimodule triple $\dT=(\kA,\kB,\dd)$
 and a system of factors $\la$ for this action, we define the \emph{crossed group triple}
 $\dT\sG=\dT(\sG,T,\la)$. Namely, we consider the \emph{crossed group category}
 $\kA\sG=\kA(\sG,T,\la)$ \cite{rr,dof}. Its objects coincide with those of $\kA$, but 
 morphisms $X\to Y$  in the category $\kA\sG$ are defined as formal (finite) linear
 combinations $\sum_{\si\in\sG}a_\si[\si]$, where $a_\si\in\kA(X^\si,Y)$, and the
 multiplication of such morphisms is defined by bilinearity and the rule
 \begin{equation}\label{e23}
  a_\si[\si] b_\tau[\tau] =a_\si b^\si_\tau\la_{\si,\tau}[\si\tau].
 \end{equation}
  The condition \eqref{e21} for a system of factors is equivalent to the associativity
 of this multiplication. The $\kA\sG$-bimodule $\kB\sG=\kB(\sG,T,\la)$ is constructed
 in an analogous way: elements of $\kB\sG(X,Y)$ are formal (finite) linear combinations
 $\sum_{\si\in\sG}x_\si[\si]$, where $x_\si\in\kB(X^\si,Y)$, and their products with
 morphisms from $\kA\sG$ are defined by the same formula \eqref{e23}, with the only
 difference that one of the elements $a_\si, b_\tau$ is a morphism from $\kA$, while
 the second one is an element from $\kB$. The differentiation $\dd$ extends to $\kA\sG$
 if we set $\dd(\sum_\si a_\si[\si])=\sum_\si \dd a_\si[\si]$. We identify every morphism
 $a\in\kA(X,Y)$ with the morphism $a[1]\in\kA\sG(X,Y)$ and every element
 $x\in\kB(X,Y)$ with the element $x[1]\in\kB\sG(X,Y)$
 getting the embedding bifunctor
 $\dT\to\dT\sG$.
  
 An action $T$ of a group $\sG$ on a bimodule triple $\dT$ induces its action $T_*$ on
 the bimodule category $\el(\dT)$: an element $\si\in\sG$ defines the functor
 $(T_\si)_*:x\mapsto x^\si$. Moreover, if $\la$ is a system of factors for the action $T$, 
 it induces the system of factors $\la_*$ for the action $T_*$: one has to set
 $(\la_*)_{\si,\tau}(x)= \la_{\si,\tau}(X)$ if $x\in\kB(X,X)$. Thus the crossed group category
 $\el(\dT)\sG=\el(\dT)(\sG,T_*,\la_*)$ is defined, as well as the embedding $\el(\dT)\to\el(\dT)\sG$.
 One can also define the natural functor $\Phi:\el(\dT)\sG\to\el(\dT\sG)$ as follows.
 For an object $x\in\kB(X,X)$, set $\Phi(x)=x[1]\in\kB\sG(X,X)$. Let $\al=\sum_\si a_\si[\si]$ be
 a morphism from $x$ to $y\in\kB(Y,Y)$ in the category $\el(\dT)\sG$. It means that
 $a_\si:x^\si\to y$ in the category $\el(\dT)$, i.e. $a_\si\in\kA(X^\si,Y)$ and
 $a_\si x^\si=ya_\si+\dd a_\si$. Then one can consider $\al$ as a morphism $X\to Y$
 in the category $\kA\sG(X,Y)$, and $\al x[1]=\sum_\si a_\si[\si]x[1] 
 =\sum_\si a_\si x^\si[\si]=\sum_\si (ya_\si+\dd a_\si)[\si]=y[1]\al+\dd\al$,
 so $\al$ is a morphism $x[1]\to y[1]$ in the category $\el(\dT\sG)$ and one can
 set $\Phi(\al)=\al$.

 \begin{prop}\label{p21}
 The functor $\Phi$ is fully faithful, i.e. for any objects $x,y$ from $\el(\dT)\sG$ it
 induces the bijective map ${\Hom_\dT}\sG(x,y)\to\Hom_{\dT\sG}(x,y)$, where
 ${\Hom_\dT}\sG$ denotes the morphisms in the category $\el(\dT)\sG$.
 \end{prop}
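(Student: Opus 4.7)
The plan is to prove injectivity and surjectivity of the map $\Hom_\dT\sG(x,y)\to\Hom_{\dT\sG}(x,y)$ separately. Here an object of $\el(\dT)\sG$ is an element $x\in\kB(X,X)$ and the two targets for the map are (i) formal sums $\be=\sum_\si a_\si[\si]$ with $a_\si\in\kA(X^\si,Y)$ satisfying $a_\si x^\si=ya_\si+\dd a_\si$ for every $\si$ (morphisms $x\to y$ in $\el(\dT)$), and (ii) formal sums $\al=\sum_\si a_\si[\si]$ with $a_\si\in\kA(X^\si,Y)$ satisfying the single equation $\al\cdot x[1]=y[1]\cdot\al+\dd\al$ in $\kB\sG(X,Y)$. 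Since $\Phi$ by construction sends the former to the ``same'' formal sum interpreted in $\kA\sG$, injectivity is immediate from the linear independence of the distinct symbols $[\si]$ used in defining morphisms of $\kA\sG$ and $\kB\sG$.

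For surjectivity, I take an arbitrary $\al=\sum_\si a_\si[\si]\in\kA\sG(X,Y)$ whose image $\Phi$-would-land-at, i.e.\ the very same formal sum, is a morphism $x[1]\to y[1]$ in $\el(\dT\sG)$, and I show that each $a_\si$ already defines a morphism $x^\si\to y$ in $\el(\dT)$. The computation proceeds by expanding both sides of $\al\cdot x[1]=y[1]\cdot\al+\dd\al$ using the multiplication rule \eqref{e23} together with the normalization $\la_{\si,1}=\la_{1,\si}=1$ and $T_1=\id_\dT$:
\[
 \al\cdot x[1]=\sum_\si a_\si x^\si[\si],\qquad y[1]\cdot\al=\sum_\si y a_\si[\si],\qquad \dd\al=\sum_\si(\dd a_\si)[\si].
\]
Comparing the coefficients of $[\si]$ (which are independent in $\kB\sG(X,Y)$ by the definition of a crossed group bimodule as formal sums) yields, for every $\si\in\sG$, the equality $a_\si x^\si=y a_\si+\dd a_\si$ in $\kB(X^\si,Y)$. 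This is exactly the defining condition for $a_\si:x^\si\to y$ to be a morphism in $\el(\dT)$, so $\be=\sum_\si a_\si[\si]$ is a morphism $x\to y$ in $\el(\dT)\sG$ with $\Phi(\be)=\al$.

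There is no substantial obstacle: the proof is a bookkeeping verification that the single constraint imposed on $\al$ in the skew triple decouples, $\si$ by $\si$, into the constraints that define morphisms in the skew of the bimodule category. The only place where one must stay careful is in invoking the normalization $\la_{\si,1}=\la_{1,\si}=1$ so that the two convolution products above collapse to the simple expressions displayed, and in appealing to the formal linear independence of the symbols $[\si]$ to pass from an equality of sums to an equality of individual components.
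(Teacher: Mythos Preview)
Your proof is correct and follows essentially the same argument as the paper: both expand $\al\,x[1]$ and $y[1]\,\al+\dd\al$ using the multiplication rule (with the normalizations $\la_{\si,1}=\la_{1,\si}=1$), compare coefficients of $[\si]$, and read off the conditions $a_\si x^\si=ya_\si+\dd a_\si$ that characterize morphisms $x^\si\to y$ in $\el(\dT)$. The paper's proof is simply terser, calling injectivity ``obvious'' and writing the surjectivity computation in one line.
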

 \begin{proof}
 Obviously, this map is injective. Let $\al=\sum_\si a_\si[\si]:x[1]\to y[1]$, i.e.
 $\al x[1]=\sum_\si a_\si x^\si[\si]=y[1]\al+\dd\al=\sum_\si (ya_\si+\dd a_\si)[\si]$. Then
 $a_\si x^\si=ya_\si+\dd a_\si$ for all $\si$, so $a_\si:x^\si\to y$ in the category $\el(\dT)$,
 thus $\al:x\to y$ in the category $\el(\dT)\sG$. Therefore, this map is also surjective. 
 \end{proof}
 
 If the group $\sG$ is finite, one can also construct a functor $\Psi:\el(\dT\sG)\to \el(\dT)$.
 For every object $X\in\ob\kA$, set $\ti X=\bop_{\si\in\sG}X^\si$ and for every element
 $\xi=\sum_\si x_\si[\si]\in\kB\sG(X,X)$, where $x_\si:X^\si\dto X$, denote by
 $\ti\xi$ the element from $\kB(\ti X,\ti X)=\bop_{\si,\tau}\kB(X^\tau,X^\si)$ such that its
 component $\ti\xi_{\si,\tau}\in\kB(X^\tau,X^\si)$ equals
 $x^\si_{\si^{-1}\tau}\la_{\si,\si^{-1}\tau}$. Note that
 $x_{\si^{-1}\tau}:X^{\si^{-1}\tau}\dto Y$, hence
 $x^\si_{\si^{-1}\tau}: (X^{\si^{-1}\tau})^\si\dto Y^\si$, thus 
 $x^\si_{\si^{-1}\tau}\la_{\si,\si^{-1}\tau}:X^\tau\dto Y^\si$ indeed. Let
 $\eta=\sum_\si y_\si[\si]\in\kB\sG(Y,Y)$, where $y_\si\in\kB(Y^\si,Y)$ and
 $\al=\sum_\si a_\si[\si]$ be a morphism from $\xi$ to $\eta$, where
 $a_\si\in\kA(X^\si,Y)$. Since
 \begin{align*}
   \al\xi &= \sum_\rho\sum_\si a_\rho[\rho] x_\si[\si] 
 =\sum_\rho\sum_\si a_\rho x^\rho_\si \la_{\rho,\si} [\rho\si] =\\
  &= \sum_\tau\big(\sum_\rho a_\rho x^\rho_{\rho^{-1}\tau}\la_{\rho,\rho^{-1}\tau}\big)[\tau],  \\
 \intertext{and}
   \eta\al &= \sum_\rho\sum_\si y_\rho[\rho] a_\si[\si] 
 =\sum_\rho\sum_\si y_\rho a^\rho_\si \la_{\rho,\si} [\rho\si]=\\
 & = \sum_\tau\big(\sum_\rho y_\rho a^\rho_{\rho^{-1}\tau}\la_{\rho,\rho^{-1}\tau}\big)[\tau] ,
 \end{align*}
 it means that, for each $\tau$,
 \begin{equation}\label{e24}
  \sum_\rho a_\rho x^\rho_{\rho^{-1}\tau}\la_{\rho,\rho^{-1}\tau}
  = \sum_\rho y_\rho a^\rho_{\rho^{-1}\tau}\la_{\rho,\rho^{-1}\tau}+\dd a_\tau . 
 \end{equation}
 Consider the morphism $\ti\al:\ti X\to \ti Y$ such that
 \[
 \ti\al_{\si,\tau}=a^\si_{\si^{-1}\tau}\la_{\si,\si^{-1}\tau}:X^\tau\to Y^\si. 
 \]
 Then the $(\si,\tau)$-component of the product $\ti\al\ti\xi$ equals
\[
 \mathrm I=\sum_\rho a^\si_{\si^{-1}\rho}\la_{\si,\si^{-1}\rho}x^\rho_{\rho^{-1}\tau}\la_{\rho,\rho^{-1}\tau}=
 \sum_\rho a^\si_{\si^{-1}\rho} (x^{\si^{-1}\rho}_{\rho^{-1}\tau}\big)^\si
 \la_{\si,\si^{-1}\rho}\la_{\rho,\rho^{-1}\tau},
\]
 while the $(\si,\tau)$-component of the product $\ti\eta\ti\al$ equals
\[
 \mathrm{II}= \sum_\rho y^\si_{\si^{-1}\rho}\la_{\si,\si^{-1}\rho}a^\rho_{\rho^{-1}\tau}\la_{\rho,\rho^{-1}\tau}=
 \sum_\rho y^\si_{\si^{-1}\rho} (a^{\si^{-1}\rho}_{\rho^{-1}\tau}\big)^\si
 \la_{\si,\si^{-1}\rho}\la_{\rho,\rho^{-1}\tau}.
 \]
 (In both cases we used the relation \eqref{e22} replacing $\tau$ by $\si^{-1}\rho$).
 Since, by the condition \eqref{e21} for the system of factors,
 \[
  \la_{\si,\si^{-1}\rho}\la_{\rho,\rho^{-1}\tau}=
   \la^\si_{\si^{-1}\rho,\rho^{-1}\tau}\la_{\si,\si^{-1}\tau},\ \text{ and }\ \dd\la_{\si,\si^{-1}\tau}=0 , 
 \]
 we get from the relation \eqref{e24} that $\mathrm I=\mathrm{II}+\dd\ti\al_{\si,\tau}$
 (we just replace $\rho$ by $\si^{-1}\rho$, $\tau$ by $\si^{-1}\tau$, then apply the functor
 $T_\si$ to both sides). Therefore, $\ti\al$ is a morphism $\ti\xi\to\ti\eta$ and one can define
 the functor $\Psi$ setting $\Psi(\xi)=\ti\xi$ and $\Psi(\al)=\ti\al$.

 \begin{prop}\label{p22}
 The functors $\Phi$ and $\Psi$ form an \emph{adjoint pair}, i.e. there is a natural isomorphism
 ${\Hom_\dT}\sG(\Phi x,\eta)\simeq\Hom_\dT(x,\Psi\eta)$ for each objects $x\in\el(\dT)$ and
 $\eta\in\el(\dT\sG)$.
 \end{prop}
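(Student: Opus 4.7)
The plan is to build an explicit bijection $\Theta$ on the underlying hom-groups of $\kA\sG$ and $\kA$ and then verify that it matches the defining conditions for morphisms in $\el(\dT\sG)$ and $\el(\dT)$, respectively. Write $x\in\kB(X,X)$ and $\eta=\sum_\si y_\si[\si]\in\kB\sG(Y,Y)$ with $y_\si\in\kB(Y^\si,Y)$, so that $\Phi x=x[1]$ and $\Psi\eta=\ti\eta\in\kB(\ti Y,\ti Y)$, where $\ti Y=\bop_\tau Y^\tau$. A candidate morphism $\Phi x\to\eta$ in $\el(\dT\sG)$ is $\al=\sum_\si a_\si[\si]$ with $a_\si\in\kA(X^\si,Y)$, while a candidate morphism $x\to\Psi\eta$ in $\el(\dT)$ is a family $\be=(b_\tau)_\tau$ with $b_\tau\in\kA(X,Y^\tau)$. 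Since each $T_\tau$ is an equivalence of bimodule triples (hence fully faithful) and $\la_{\tau,\tau^{-1}}:X\ito(X^{\tau^{-1}})^\tau$ is an isomorphism, the formula
\[
\Theta(\al)_\tau \;=\; a^\tau_{\tau^{-1}}\la_{\tau,\tau^{-1}}\;:\;X\to Y^\tau
\]
defines a bijection $\Theta:\kA\sG(X,Y)\ito\kA(X,\ti Y)$ at the ambient level.

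The substantial step is to show that $\Theta$ sends the relation defining a morphism $\Phi x\to\eta$ to the relation defining a morphism $x\to\Psi\eta$. Expanding $\al\,x[1]=\eta\al+\dd\al$ at the $[\tau]$-coefficient gives
\[
a_\tau x^\tau \;=\; \sum_\rho y_\rho\,a^\rho_{\rho^{-1}\tau}\,\la_{\rho,\rho^{-1}\tau} \;+\; \dd a_\tau,
\]
and expanding $\be x=\ti\eta\,\be+\dd\be$ at the $\tau$-component gives
\[
b_\tau x \;=\; \sum_\si y^\tau_{\tau^{-1}\si}\,\la_{\tau,\tau^{-1}\si}\,b_\si \;+\; \dd b_\tau.
\]
To derive the second from the first, I substitute $\tau\mapsto\tau^{-1}$, apply the bifunctor $T_\tau$ to both sides (it commutes with $\dd$), and right-multiply by $\la_{\tau,\tau^{-1}}$. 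On the left side, \eqref{e22} converts $(x^{\tau^{-1}})^\tau\la_{\tau,\tau^{-1}}$ into $\la_{\tau,\tau^{-1}}x$, yielding $b_\tau x$. The derivation term becomes $\dd(a^\tau_{\tau^{-1}}\la_{\tau,\tau^{-1}})=\dd b_\tau$ by the Leibniz rule combined with $\dd\la_{\tau,\tau^{-1}}=0$. Reindexing the summation via $\rho=\tau^{-1}\si$ and invoking the cocycle identity \eqref{e21} in the form $\la^\tau_{\tau^{-1}\si,\si^{-1}}\,\la_{\tau,\tau^{-1}}=\la_{\tau,\tau^{-1}\si}\,\la_{\si,\si^{-1}}$, together with the naturality of $\la_{\tau,\tau^{-1}\si}$ across $a^\si_{\si^{-1}}$, produces precisely the $\si$-summand on the right of the second identity. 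Each manipulation is reversible, so $\Theta$ restricts to the desired bijection on morphism sets.

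Naturality of $\Theta$ in $x$ and $\eta$ follows directly from the constructions of $\Phi$ and $\Psi$ on morphisms together with the associativity of composition; no further factor-system input is required. The principal obstacle is the middle calculation: it is a careful bookkeeping exercise in the $\la$'s, but conceptually it runs entirely parallel to the verification that $\Psi$ is well defined on morphisms given in the text (compare the computation of $\mathrm I$ and $\mathrm{II}$ preceding Proposition \ref{p22}). No new ingredient beyond the cocycle identity, the naturality \eqref{e22} of $\la$, the Leibniz rule, and $\dd\la=0$ is needed.
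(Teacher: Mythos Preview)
Your proposal is correct and follows essentially the same approach as the paper: the map $\Theta(\al)_\tau=a^\tau_{\tau^{-1}}\la_{\tau,\tau^{-1}}$ is exactly the paper's $f(\al)$, and the cocycle manipulation you invoke is the same identity used there. The only cosmetic difference is that you argue bijectivity by noting $\Theta$ is already a bijection on the ambient hom-groups and that the verification is reversible, whereas the paper explicitly writes down the inverse $a_\si=\la^{-1}_{\si,\si^{-1}}\be^\si_{\si^{-1}}$ and rechecks the morphism condition.
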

 \begin{proof}
  Let $x\in\kB(X,X),\ \eta\in\kB\sG(Y,Y),\ \eta=\sum_\si y_\si[\si]$, where
 $y:Y^\si\dto Y$, and $\al:\Phi (x)=x[1]\to\eta$ in the category $\el(\dT\sG)$. By definition,
 $\al=\sum_\si a_\si[\si]$, where $a_\si:X^\si\to Y$, and
 \[
 \al x[1]=\sum_\si a_\si x^\si[\si]= \eta \al +\dd\al=
  \sum_\si\big(\sum_\rho y_\rho a^\rho_{\rho^{-1}\si}\la_{\rho,\rho^{-1}\si}+\dd a_\si\big)[\si], 
 \]
 i.e.
 \begin{equation}\label{e25}
 a_\si x^\si=\sum_\rho y_\rho a^\rho_{\rho^{-1}\si}\la_{\rho,\rho^{-1}\si}+\dd a_\si
 \end{equation}
 for every $\si$. Consider the morphism $f(\al)=\be:X^\tau\to\ti Y=\bop_\si Y^\si$ such that
 its component $\be_\si:X\to Y^\si$ equals $a^\si_{\si^{-1}}\la_{\si,\si^{-1}}$. Compute
 the $\si$-components of the products $\be x$ and $\ti\eta\be$, where $\ti\eta=\Psi\eta$. 
 They equal, respectively,
 \begin{align*}
  & \be_\si x^\tau=a^\si_{\si^{-1}}\la_{\si,\si^{-1}}x=
  a^\si_{\si^{-1}}(x^{\si^{-1}})^\si\la_{\si,\si^{-1}}\\
 \intertext{and}
 &\sum_\rho y^\si_{\si^{-1}\rho}\la_{\si,\si^{-1}\rho}a^\rho_{\rho^{-1}}
 \la_{\rho,\rho^{-1}}=
 \sum_\rho y^\si_{\si^{-1}\rho}(a^{\si^{-1}\rho}_{\rho^{-1}})^\si
 \la_{\si,\si^{-1}\rho}\la_{\rho,\rho^{-1}}=\\
 &\ =\sum_\rho y^\si_{\si^{-1}\rho}(a^{\si^{-1}\rho}_{\rho^{-1}})^\si
 \la^\si_{\si^{-1}\rho,\rho^{-1}}\la_{\si,\si^{-1}}.
 \end{align*}
 The relation \eqref{e25}, where $\si$ is replaced by $\si^{-1}$
 and $\rho$ by $\si^{-1}\rho$, these two expressions differ exactly by
 $\dd\be_\si=\dd a^\si_{\si^{-1}}\la_{\si,\si^{-1}}$, hence $\be=f(\al)$ 
 is a morphism $x\to \ti\eta$ in the category $\el(\dT)$. Obviously, if $\al\ne\al'$,
 then $f(\al)\ne f(\al')$ as well. Moreover, one easily checks that the correspondence
 $\al\mapsto f(\al)$ is functorial in $x$ and $\eta$, i.e.  $f(\al) b=f(\al\Phi b)$ and
 $f(\ga\al)=(\Psi\ga)f(\al)$ for any morphisms $b:x'\to x$ and $\ga:\eta\to\eta'$.

 On the contrary, let $\be:x\to \ti\eta$ be a morphism in the category $\el(\dT)$.
 Denote by $\be_\si:X\to Y^\si$ the corresponding component of $\be$ and
 consider the morphism $\al=\sum_\si a_\si[\si]:X\to Y$ in the category $\kA\sG$,
 where $a_\si=\la^{-1}_{\si,\si^{-1}}\be^\si_{\si^{-1}}:X^\si\to Y$. Comparing
 the $\si$-components in the equality $\be x=\ti\eta\be$, we get
 \begin{equation}\label{e26}
   \be_\si x=\sum_\rho y^\si_{\si^{-1}\rho}\la_{\si,\si^{-1}\rho}\be_\rho+\dd \be_\si.
 \end{equation}
 The coefficients near $[\si]$ in the products $\al(\Phi x)=\al x[1]$ and $\eta\al$
 equal, respectively, 
 \begin{align*}
    a_\si x^\si&=\la^{-1}_{\si,\si^{-1}}\be^\si_{\si^{-1}}x^\si \\
 \intertext{and}
  \sum_\rho y_\rho a^\rho\la_{\rho^{-1}\si}&=
 \sum_\rho y_\rho (\la^{-1}_{\rho^{-1}\si,\si^{-1}\rho})^\rho
 \be^{\rho^{-1}\si}_{\si^{-1}\rho}\la_{\rho,\rho^{-1}\si} =\\
 &=\sum_\rho y_\rho (\la^{-1}_{\rho^{-1}\si,\si^{-1}\rho})^\rho
 \la_{\rho,\rho^{-1}\si}\be^\si_{\si^{-1}\rho} . 
 \end{align*}
 The relation \eqref{e26}, with $\si$ replaced by $\si^{-1}$, implies that
 \begin{align*}
  a_\si x^\si -\dd a_\si=& 
  \sum_\rho \la^{-1}_{\si,\si^{-1}}(y^{\si^{-1}}_{\si\rho})^\si
  \la^\si_{\si^{-1},\rho}\be^\si_{\si^{-1}\rho} =\\
  =&\sum_\rho y_{\si\rho}\la^{-1}_{\si,\si^{-1}} 
  \la^\si_{\si^{-1},\rho}\be^\si_{\si^{-1}\rho} =
 \sum_\rho y_\rho\la^{-1}_{\si,\si^{-1}}
 \la^\si_{\si^{-1},\si\rho}\be^\si_{\rho} = \\
  =& \sum_\rho y_\rho\la^{-1}_{\si,\si^{-1}\rho}\be^\si_{\si^{-1}\rho}=
   \sum_\rho y_\rho (\la^{-1}_{\rho^{-1}\si,\si^{-1}\rho})^\rho
 \la_{\rho,\rho^{-1}\si}\be^\si_{\si^{-1}\rho} .
 \end{align*}
 (Passing from the second row to the third, we used the relation \eqref{e21} 
 for the triple $\si,\si^{-1},\rho$, while in the third row we used the same relation
 for the triple $\rho,\rho^{-1}\si,\si^{-1}\rho$.) Therefore,
 $\al x[1]=\eta\al+\dd\al$, thus $\al$ is a morphism $\Phi x\to \eta$. Moreover, the
 $\si$-component of $f(\al)$ equals 
 \[
 a^\si_{\si^{-1}}\la_{\si,\si^{-1}} =
 (\la^{-1}_{\si^{-1},\si})^\si(\be^{\si^{-1}}_\si)^\si \la_{\si,\si^{-1}}=
 (\la^{-1}_{\si^{-1},\si})^\si\la_{\si,\si^{-1}}\be_\si=\be_\si.
 \]
 Hence $f(\al)=\be$ and the map $\al\mapsto f(\al)$ is bijective.
 \end{proof}

 \section{Separable actions}
 \label{s3}

 We call the \emph{center} $\kZ(\dT)$ of a bimodule triple $\dT=(\kA,\kB,\dd)$ the
 endomorphism ring of the identity bifunctor $\id_\dT$. In other words, the elements
 of this center are the sets of morphisms 
 \[
  \al=\setsuch{\al_X:X\to X}{X\in\ob\kA},
 \]
 such that $\al_Ya=a\al_X$ for every morphism $a:X\to Y$, $\al_Yx=x\al_X$
 for every element $x:X\dto Y$ and $\dd\al_X=0$ for all $X$. In particular,
 the element $\al_X$ belongs to the center of the algebra $\kA(X,X)$. One
 easily sees that if $\al=\set{\al_X}$ and $\be=\set{\be_X}$ are two such sets,
 then the sets $\al+\be=\set{\al_X+\be_X}$ and $\al\be=\set{\al_X\be_X}$
 also belong to $\kZ(\dT)$. Hence, this center is a ring (even a $\bK$-algebra),
 commutative, since $\al_X\be_X=\be_X\al_X$. If $F=(F_0,F_1)$ is an equivalence
 of bimodule triples $\dT\to\dT'=(\kA',\kB',\dd')$, it induces an isomorphism
 $F_\kZ:\kZ(\dT)\ito\kZ(\dT')$. Namely, for any $X'\in\ob\kA'$, choose an
 isomorphism $\la:X'\to F_0X$ for some $X\in\ob\kA$, and, for each element 
 $\al=\set{\al_X}\in\kZ(\dT)$, set $(F_\kZ\al)_{X'}=\la^{-1}(F_0\al_X)\la$. 
 Let $Y'$ be another object from $\kA$, $\mu:Y'\ito F_0Y$ and
 $(F_\kZ\al)_{Y'}=\mu^{-1}(F_0\al_Y)\mu$. If $a'\in\kA'(X',Y')$, the morphism
 $\mu a'\la^{-1}:F_0X\to F_0Y$ is of the form $F_0a$ for some $a:X\to Y$. 
 It gives
 \begin{equation}\label{e31}
 \begin{split}
  (F_\kZ\al)_{Y'}a'&=\mu^{-1}(F_0\al_Y)\mu\cdot\mu^{-1}(F_0a)\la= \\&=
  \mu^{-1}(F_0\al_Y)(F_0a)\la=\mu^{-1}(F_0(\al_Ya))\la= \\&=
 \mu^{-1}F_0(a\al_X)\la=\mu^{-1}(F_0a)(F_0\al_X)\la=\\
 & =a'\la^{-1}(F_0\al_X)\la =a'(F_\kZ\al)_{X'}.    
 \end{split}
 \end{equation}
 Especially, if $Y'=X'$ and $a'=1_{X'}$, we see that $F_\kZ(\al)_{X'}$ does
 not depend on the choice of $X$ and $\la$. Just in the same way one checks
 that $(F_\kZ\al)_{Y'}x'=x'(F_\kZ\al)_{X'}$ for every $x'\in\kB'(X',Y')$. Note
 that an isomorphism $\la$ can always be chosen such that $\dd\la=0$: 
 for instance, one can use the isomorphism of bifunctors
 $\phi:\id_{\dT'}\to FG$ for some bifunctor $G$ and set
 $X=G_0X',\ \la=\phi(X')$. Therefore $\dd'(F_\kZ\al)_{X'}=0$,
 so the set $F_\kZ\al=\set{(F_\kZ\al)_{X'}}$ belongs to $\kZ(\dT')$.
 Obviously, $F_\kZ(\al+\be)=F_\kZ\al+F_\kZ\be$ and $F_\kZ(\al\be)=
 (F_\kZ\al)(F_\kZ\be)$, and if $F':\dT'\to\dT''$ is another equivalence,
 then $(F'F)_\kZ=F'_\kZ F_\kZ$. Moreover, similarly to the equalities \eqref{e31},
 one easily verifies that if $F\simeq F'$, then $F_\kZ=F'_\kZ$. In particular,
 if $G:\dT'\to\dT$ is such a bifunctor that $FG\simeq\id_{\dT'}$
 and $GF\simeq\id_\dT$, then $G_\kZ=F_\kZ^{-1}$, thus $F_\kZ$ is an
 isomorphism. 

 These considerations imply that every action $T$ of a group $\sG$ on a triple $\dT$
 induces an action of the same group on the center of this triple with the trivial system
 of factors: if $\la$ is a system of factors for the action $T$, then
 $(\al^\si)_X=\la_{\si,\si^{-1}}^{-1} \al^\si_{X^{\si^{-1}}} \la_{\si,\si^{-1}}$
 for every $\al\in\kZ(\dT)$. Especially, if the group $\sG$ is finite, for any
 element $\al$ from $\kZ(\dT)$ its \emph{trace} is defined as
 $\tr\al=\tr_\sG\al=\sum_\si\al^\si$, i.e. $(\tr\al)_X=\sum_\si \la_{\si,\si^{-1}}^{-1}
 \al^\si_{X^{\si^{-1}}} \la_{\si,\si^{-1}}$. Obviously, the center of the triple $\dT\sG$
 is a subalgebra of the center of $\dT$.

 \begin{prop}\label{p31}
 The center $\kZ(\dT\sG)$ coincides with the subalgebra $\kZ(\dT)^\sG$ of elements
 of the center $\kZ(\dT)$ that are invariant under the action of $\sG$. In particular, if
 this group is finite, the trace of each element $\al\in\kZ(\dT)$ belongs to $\kZ(\dT\sG)$.
 \end{prop}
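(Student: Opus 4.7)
The trace statement follows from the main equality once one observes that the action permutes the summands of $\tr\al$: a direct computation with \eqref{e21} gives $(\al^\si)^\tau=\al^{\tau\si}$ (the nested $\la$-conjugations collapse by the cocycle), so $(\tr\al)^\tau=\tr\al$. I therefore concentrate on proving $\kZ(\dT\sG)=\kZ(\dT)^\sG$ by a pair of inclusions, both of which turn on the identity
\[
\al_{X^\si}=\al_X^\si
\]
for $\al\in\kZ(\dT)$, obtained once from $\sG$-invariance and once from centrality against a suitable twisted morphism in $\kA\sG$.

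\textbf{Inclusion $\kZ(\dT)^\sG\sbe\kZ(\dT\sG)$.} Given $\al\in\kZ(\dT)^\sG$, regard $\al_X$ as $\al_X[1]\in\kA\sG(X,X)$. By linearity and \eqref{e23}, centrality in $\dT\sG$ need only be checked against single-term morphisms $a[\si]$ with $a\in\kA(X^\si,Y)$ (and against $x[\si]$ for $x\in\kB(X^\si,Y)$, identically). Using $\la_{1,\si}=\la_{\si,1}=1$ the commutation becomes $\al_Y a=a\al_X^\si$; combined with the centrality $\al_Y a=a\al_{X^\si}$ coming from $\al\in\kZ(\dT)$, it reduces to $\al_{X^\si}=\al_X^\si$. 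I extract this identity from $\sG$-invariance: evaluate $(\al^\si)_Y=\al_Y$ at $Y=X^\si$, expand by the defining formula $(\al^\si)_Y=\la_{\si,\si^{-1}}^{-1}\al^\si_{Y^{\si^{-1}}}\la_{\si,\si^{-1}}$, use the centrality of $\al$ in $\dT$ to conjugate $\al_{(X^\si)^{\si^{-1}}}$ back to $\al_X$ along $\la_{\si^{-1},\si}(X):X\to(X^\si)^{\si^{-1}}$, and apply the cocycle consequence $\la^\si_{\si^{-1},\si}(X)=\la_{\si,\si^{-1}}(X^\si)$ (immediate from \eqref{e21} at $(\rho,\si,\tau)=(\si,\si^{-1},\si)$ together with $\la_{\si,1}=\la_{1,\si}=1$). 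The $\la$'s telescope and the identity emerges.

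\textbf{Inclusion $\kZ(\dT\sG)\sbe\kZ(\dT)^\sG$.} Let $\al\in\kZ(\dT\sG)$; by the remark just before the proposition we have $\al\in\kZ(\dT)$, so only the invariance $\al^\si=\al$ needs proof. For each $\si$ and $X$ consider the twisted identity $u_\si=1_{X^\si}[\si]\in\kA\sG(X,X^\si)$. A short calculation with \eqref{e23} and the same cocycle identity shows that $u_\si$ is invertible, with inverse $\la_{\si^{-1},\si}^{-1}(X)[\si^{-1}]$. The centrality relation $\al_{X^\si}u_\si=u_\si\al_X$, expanded via \eqref{e23}, immediately yields $\al_{X^\si}=\al_X^\si$. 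Running the chain of the previous paragraph in reverse (for arbitrary $Y$, take $X=Y^{\si^{-1}}$ and conjugate by $\la_{\si,\si^{-1}}(Y)$ using centrality of $\al$ in $\dT$) turns this identity into $(\al^\si)_Y=\al_Y$ for every $Y$, i.e.\ $\al\in\kZ(\dT)^\sG$.

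The real obstacle is not conceptual but notational: the bookkeeping with the system of factors, the distinction between $\al_{X^\si}$ and $\al_X^\si$, and the whiskering conventions implicit in \eqref{e21}. The whole argument rests on the single cocycle identity $\la^\si_{\si^{-1},\si}=\la_{\si,\si^{-1}}$; the vanishing conditions $\dd\al_X=0$ and $\dd\la_{\si,\si^{-1}}=0$ dispose of the $\dd$-part of the verification automatically.
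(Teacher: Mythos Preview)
Your argument is correct and follows essentially the same route as the paper: both proofs reduce the question to the single identity $\al_{X^\si}=\al_X^\si$, obtain it from centrality against morphisms of the form $a[\si]$ (you specialize to $a=1_{X^\si}$, the paper keeps $a$ arbitrary), and then convert it to $\sG$-invariance via the cocycle consequence $\la^\si_{\si^{-1},\si}=\la_{\si,\si^{-1}}$. The only stylistic difference is that the paper dispatches the invariance step in one line by substituting $\al_{X^{\si^{-1}}}=\al_X^{\si^{-1}}$ directly into the defining formula for $(\al^\si)_X$, while you evaluate at $Y=X^\si$ and conjugate along $\la_{\si^{-1},\si}$; these are the same computation.
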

 \begin{proof}
 Let $\al=\set{\al_X}$ be an element of the center $\kZ(\dT)$. Since
 $\al_Ya[\si]=a\al_{X^\si}[\si]$ and $a[\si]\al_X=a\al_X^\si[\si]$ for each
 morphism $a:X^\si\to Y$, this element belongs to the center of the triple
 $\dT\sG$ \iff $\al_{X^\si}=\al^\si_X$ for every $X$ and every $\si$. But then
 \[
   (\al^\si)_X=\la^{-1}_{\si,\si^{-1}}\al_{X^{\si^{-1}}}^\si\la_{\si,\si^{-1}} =
 \la^{-1}_{\si,\si^{-1}}(\al^{\si^{-1}}_X)^\si\la_{\si,\si^{-1}}=\al_X,
 \]
 so $\al$ is invariant under the action of $\sG$. Just in the same way one verifies that
 every invariant element from $\kZ(\dT)$ belongs to $\kZ(\dT\sG)$. The last statement
 follows from the fact that $\tr\al$ is always invariant under the action of the group.
 \end{proof}
 
 \begin{defn}\label{d31}
 We call an action of a finite group $\sG$ on a bimodule triple $\dT$ 
 \emph{separable}, if there is an element of the center $\al\in\kZ(\dT)$
 such that $\tr\al=1$.  
 \end{defn}

 Certainly, it is enough $\tr\al$ to be invertible. For instance, if the order of the
 group $\sG$ is invertible in the ring $\bK$, any action of this group is separable.
 Another important case is when the center of the triple $\dT$ contains a subring
 $\bR$ such that it is $\sG$-invariant, the group $\sG$ acts \emph{effectively}
 (i.e. for any $\si\ne1$ there is $r\in\bR$ such that $r^\si\ne r$) and $\bR$ is a
 \emph{separable extension} of its subring of invariants $\bR^\sG$ \cite{chr}. 
 If $\bR$ is a field and $\sG$ acts effectively on $\bR$, the last condition always
 holds. In general case it is necessary and sufficient
 that every element $\si\ne1$ induce a non-identity
 automorphism of the residue field $\bR/\gM$ for each maximal ideal $\gM\subset\bR$
 such that $\gM^\si=\gM$ \cite[Theorem 1.3]{chr}. For an action of a group on
 a category (that is, on a principle triple) the notion of separability was introduced
 in \cite{dof}. Obviously, if an action of a group on a bimodule triple is separable,
 so is also its induced action on the corresponding bimodule category. We also
 note that if an action of a group $\sG$ is separable, so is the action of every
 subgroup $\sH\sbe\sG$: if $\tr_\sG\al=1$ and $\be=\sum_{\si\in R}\al^\si$, where
 $R$ is a set of representatives of right cosets $\sH\backslash\sG$, then $\tr_\sH\be=1$.

 Recall that a ring homomorphism $\bA\to\bA'$ is called \emph{separable} if
 the natural homomorphism of $\bA'$-bimodules $\bA'\*_\bA\bA'\to\bA'$ sending
 $a\*b$ to $ab$ \emph{splits}, i.e. there is an element $\sum_ib_i\*c_i$ in
 $\bA'\*_\bA\bA'$ such that $\sum_ib_ic_i=1$ and $\sum_iab_i\*c_i=\sum_ib_ic_ia$
 for all $a\in\bA'$.

 \begin{lemma}\label{l32}
 An action of a finite group $\sG$ on a triple $\dT$ is separable \iff so is the ring
 homomorphism $\kZ\to\kZ\sG$, where $\kZ=\kZ(\dT)$.
  \end{lemma}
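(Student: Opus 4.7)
The plan is to unwind $\kZ\sG$ explicitly as the skew group ring of $\kZ$ by $\sG$. By the discussion preceding Proposition~\ref{p31}, the induced $\sG$-action on $\kZ=\kZ(\dT)$ is a genuine action by ring automorphisms with \emph{trivial} system of factors; accordingly $\kZ\sG=\bop_{\si\in\sG}\kZ[\si]$ with multiplication $z[\si]\cdot w[\tau]=zw^\si[\si\tau]$, and $\kZ\hookrightarrow\kZ\sG,\ z\mapsto z[1]$, makes $\kZ\sG$ a free $\kZ$-module of rank $|\sG|$ on each side. It follows that every element of $\kZ\sG\otimes_\kZ\kZ\sG$ has a unique expansion $\sum_{\si,\tau}z_{\si,\tau}[\si]\otimes[\tau]$ with $z_{\si,\tau}\in\kZ$, and this is the bookkeeping I would lean on in both directions.

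For $(\Rightarrow)$, given $\al\in\kZ$ with $\tr\al=1$, I would set
\[
 e=\sum_{\si\in\sG}\al^\si\,[\si]\otimes[\si^{-1}]\ \in\ \kZ\sG\otimes_\kZ\kZ\sG,
\]
and verify the two separability conditions. The multiplication condition is immediate: $\sum_\si\al^\si[\si]\cdot[\si^{-1}]=(\tr\al)[1]=1$. The centrality condition $ae=ea$ need only be checked on the generators $a=u\in\kZ$ and $a=[\rho]$. For $u\in\kZ$ it follows from the commutativity of $\kZ$; for $a=[\rho]$ one computes $[\rho]\cdot\al^\si[\si]\otimes[\si^{-1}]=\al^{\rho\si}[\rho\si]\otimes[\si^{-1}]$ and then reindexes $\si\mapsto\rho^{-1}\si$ to match $e\cdot[\rho]=\sum_\si\al^\si[\si]\otimes[\si^{-1}\rho]$ term by term.

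For $(\Leftarrow)$, starting from a separability element $e=\sum_{\si,\tau}z_{\si,\tau}[\si]\otimes[\tau]$, I would expand the identity $[\rho]\cdot e=e\cdot[\rho]$ using $[\rho]z=z^\rho[\rho]$ and compare coefficients. Uniqueness of the expansion forces the recursion $z_{\rho\si,\tau}=z^\rho_{\si,\tau\rho}$ for all $\si,\tau,\rho\in\sG$; taking $\si=1$ expresses every coefficient as $z_{\rho,\tau}=\al^\rho_{\tau\rho}$, where $\al_\mu:=z_{1,\mu}$. The $[1]$-component of the multiplication condition $\sum_{\si,\tau}z_{\si,\tau}[\si\tau]=[1]$ then collapses to $\sum_\rho\al_1^\rho=\tr(\al_1)=1$, so $\al:=z_{1,1}\in\kZ$ is the desired trace-$1$ element.

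The main point to take care with is the interplay between the $\sG$-action on $\kZ$ and the balance relation $u^\si[\si]\otimes[\tau]=[\si]\otimes u[\tau]$ in the tensor product: the uniqueness of the expansion $\sum z_{\si,\tau}[\si]\otimes[\tau]$ rests on the one-sided $\kZ$-freeness of $\kZ\sG$, and the convention $(x^\tau)^\si=x^{\si\tau}$ must be applied consistently in the reindexings. Once this is set up both directions are short index manipulations, and in the $(\Leftarrow)$ direction one does not need to solve for all of the $z_{\si,\tau}$ --- only the single coefficient $z_{1,1}$ is required.
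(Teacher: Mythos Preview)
Your proposal is correct and follows essentially the same route as the paper: the same separability idempotent $\sum_\si\al^\si[\si]\otimes[\si^{-1}]$ for $(\Rightarrow)$, and for $(\Leftarrow)$ the same comparison of coefficients in $[\rho]\cdot e=e\cdot[\rho]$ leading to $z_{\si,\si^{-1}}=z_{1,1}^\si$ and hence $\tr z_{1,1}=1$. You are slightly more explicit than the paper about the freeness of $\kZ\sG$ over $\kZ$ and the resulting uniqueness of the expansion $\sum z_{\si,\tau}[\si]\otimes[\tau]$, which is exactly what justifies the coefficient comparison the paper performs without comment.
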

 \begin{proof}
 Suppose that the action is separable, $\al=\set{\al_X}$ is such an element of the center
 that $\tr\al=1$. Let $t=\sum_\si\al^\si[\si]\*[\si^{-1}]\in\kZ\sG\*_\kZ\kZ\sG$.
 Then $\sum_\si\al^\si[\si][\si^{-1}]=\tr\al=1$ and, for any $\be\in\kZ,\ \tau\in\sG$,
 \begin{align*}
 \be[\tau]\cdot t&=\sum_\si\be\al^{\tau\si}[\tau\si]\*[\si^{-1}]=
 \sum_\si\al^{\tau\si}\be[\tau\si]\*[\si^{-1}]=\\ &=\sum_\si \al^\si\be[\si]\*[\si^{-1}\tau]=
 \sum_\si\al^\si[\si]\*[\si^{-1}]\be[\tau]=t\cdot \be[\tau],
 \end{align*}
 so the homomorphism $\kZ\to\kZ\sG$ is separable.

 Now let the homomorphism $\kZ\to\kZ\sG$ be separable. Note that every element from
 $\kZ\sG\*_\kZ\kZ\sG$ is of the form $\sum_{\si,\tau}z_{\si,\tau}[\si]\*[\tau]$
 for some $z_{\si,\tau}\in\kZ$. Hence there are elements $z_{\si,\tau}$ such that
 $\sum_{\si,\tau}z_{\si,\tau}[\si\tau]=\sum_\tau\big(\sum_\si z_{\si,\si^{-1}\tau}\big)[\tau]=1$,
 i.e. $\sum_\si z_{\si,\si^{-1}}=1$, and $\sum_\si z_{\si,\si^{-1}\tau}=0$ if $\tau\ne1$,
 moreover, for every $\rho\in\sG$ we have:
 \begin{align*}
  & [\rho]\big(\sum_{\si,\tau} z_{\si,\tau}[\si]\*[\tau]\big)=\sum_{\si,\tau}z^\rho_{\si,\tau}[\rho\si] \*[\tau]
 =\sum_{\si,\tau}z^\rho_{\rho^{-1}\si,\tau}[\si]\*[\tau] = \\
 &=\big(\sum_{\si,\tau}z_{\si,\tau}[\si]\*[\tau]\big)[\rho]=\sum_{\si,\tau}z_{\si,\tau}[\si]\*[\tau\rho]=
 \sum_{\si,\tau}z_{\si,\tau\rho^{-1}}[\si]\*[\tau]. 
 \end{align*}
 Thus $z^\rho_{\rho^{-1}\si,\tau}=z_{\si,\tau\rho^{-1}}$ for $\rho,\si,\tau$. Especially, for
 $\si=\rho,\ \tau=1$ we get $z_{\si,\si^{-1}}=z^\si_{1,1}$. Therefore, $\tr z_{1,1}=1$ and
 the action is separable. 
 \end{proof}

 \begin{corol}\label{c33}
 If an action of a group $\sG$ on a triple $\dT=(\kA,\kB,\dd)$ is separable, so is also
 the embedding functor $\kA\to\kA\sG$, i.e. the homomorphism of $\kA\sG$-bimodules
 $\phi:\kA\sG\*_\kA\kA\sG\to\kA\sG$ splits, or, the same, for every object $X\in\ob\kA$ there is
 an element $t_X\in(\kA\sG\*_\kA\kA\sG)(X,X)$ such that $\phi(t_X)=1_X$ and $at_X=t_Ya$
 for each $a\in\kA\sG(X,Y)$. In particular, the action of a group $\sG$ on a category $\kA$
 is separable \iff so is the embedding functor $\kA\to\kA\sG$.
 \end{corol}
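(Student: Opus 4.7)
The plan is to lift, object by object, the separability element produced in Lemma \ref{l32}. By hypothesis and that lemma, there is a central element $\al=\{\al_X\}\in\kZ(\dT)$ with $\tr\al=1$, and the element $t=\sum_\si\al^\si[\si]\*[\si^{-1}]\in\kZ\sG\*_\kZ\kZ\sG$ witnesses the separability of the ring homomorphism $\kZ\to\kZ\sG$. For each object $X\in\ob\kA$ I would set
\[
 t_X=\sum_{\si\in\sG}\al^\si_X[\si]\*[\si^{-1}]\in(\kA\sG\*_\kA\kA\sG)(X,X),
\]
where the tensor is taken over the intermediate object $X^{\si^{-1}}\in\ob\kA$; this is simply the natural object-level instantiation of $t$. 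I would then check the two properties asked of $t_X$.

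First I would verify $\phi(t_X)=1_X$. By the multiplication rule \eqref{e23} one has $[\si][\si^{-1}]=\la_{\si,\si^{-1}}[1]$, so that $\phi(t_X)=\sum_\si\al^\si_X\la_{\si,\si^{-1}}[1]=(\tr\al)_X[1]=1_X$. This mirrors exactly the analogous step in the proof of Lemma \ref{l32}.

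Next I would verify the bimodule naturality $at_X=t_Ya$ for every $a\in\kA\sG(X,Y)$. By bilinearity it is enough to treat a generator $a=a_\tau[\tau]$ with $a_\tau\in\kA(X^\tau,Y)$; then the required identity is the object-level translation of the equality $\be[\tau]\cdot t=t\cdot\be[\tau]$ established in the proof of Lemma \ref{l32}, with the central scalar $\be$ replaced by the arbitrary morphism $a_\tau$. The calculation expands both sides via \eqref{e23}, reindexes $\si\mapsto\tau^{-1}\si$ on one side, and uses the cocycle condition \eqref{e21} to match the resulting products of $\la$-factors. The one extra ingredient not needed in the center-level argument is that $a_\tau$ must be commuted past the relevant $\al^{\tau\si}$-factor; this is allowed precisely because $\al$ is central, so that $(\al^\rho)_Y\,a_\tau=a_\tau\,(\al^\rho)_{X^\tau}$ for every $\rho\in\sG$. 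I expect the main obstacle to be keeping the $\la$-bookkeeping straight, but once one fixes the convention identifying $\kA(X,X)$ with $\kA((X^{\si^{-1}})^\si,X)$ via precomposition with $\la_{\si,\si^{-1}}^{-1}$, the cocycle identity \eqref{e21} forces the two sides to coincide automatically.

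The final \emph{in particular} clause then falls out immediately: for the principle triple $\dT=(\kA,\kA,0)$ the notion of separable action on $\dT$ is by definition separability of the action on $\kA$, and the construction above supplies the forward implication. For the converse I would observe that a bimodule splitting of $\phi:\kA\sG\*_\kA\kA\sG\to\kA\sG$ restricts to the subring of central families to give a separability element for $\kZ\to\kZ\sG$; Lemma \ref{l32} then closes the circle by returning the separability of the action itself.
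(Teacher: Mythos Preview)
The paper records this as an immediate corollary of Lemma~\ref{l32} and gives no separate proof. Your construction—instantiating the witness $t=\sum_\si\al^\si[\si]\otimes[\si^{-1}]$ of that lemma at each object and rerunning the naturality computation with the cocycle factors $\la_{\si,\tau}$ inserted—is precisely the intended fleshing-out, and your outline of the forward direction is correct, including your acknowledgement that the only real work is the $\la$-bookkeeping handled via the identification $\kA(X,X)\simeq\kA((X^{\si^{-1}})^\si,X)$.

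Your converse for the \emph{in particular} clause, however, is too quick as stated. A bimodule section of $\phi:\kA\sG\otimes_\kA\kA\sG\to\kA\sG$ does not literally ``restrict to the subring of central families'': there is no natural map from $\kA\sG\otimes_\kA\kA\sG$ back to $\kZ\sG\otimes_\kZ\kZ\sG$, so one cannot simply read off a separability element for $\kZ\to\kZ\sG$. The fix is to rerun the second half of the proof of Lemma~\ref{l32} directly at the category level: write each $t_X$ as $\sum_{\si,\tau}z^X_{\si,\tau}[\si]\otimes[\tau]$, use naturality under $a[1]$ (for $a\in\mor\kA$) to see that the family $\{z^X_{1,1}\}$ lies in $\kZ(\dT)$, and then use naturality under the $[\rho]$'s together with $\phi(t_X)=1_X$ to conclude $\tr z_{1,1}=1$, exactly as in the ring case.
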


 \begin{theorem}\label{t34}
 If an action of a finite group $\sG$ on a bimodule triple $\dT=(\kA,\kB,\dd)$ is
 separable, the functor $\Phi:\el(\dT)\sG\to\el(\dT\sG)$ induces an equivalence
 of the categories $\add\el(\dT)\sG\to\el(\dT\sG)$. 
 \end{theorem}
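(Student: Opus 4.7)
By Proposition \ref{p21} the functor $\Phi$ is fully faithful, and since $\el(\dT\sG)$ is fully additive, $\Phi$ extends to a fully faithful functor $\add\el(\dT)\sG\to\el(\dT\sG)$. What remains is to show that every $\eta\in\el(\dT\sG)$ is isomorphic to a direct summand of $\Phi(x)$ for some $x\in\el(\dT)\sG$. The natural candidate is $x=\ti\eta:=\Psi\eta$, based on $\ti Y=\bop_\si Y^\si$ where $\eta$ is based on $Y$, viewed inside $\el(\dT)\sG$ via the canonical embedding $\el(\dT)\to\el(\dT)\sG$. The adjunction of Proposition \ref{p22} supplies a counit $c_\eta:\Phi(\ti\eta)\to\eta$ corresponding to $\id_{\ti\eta}$; tracing the formula in the proof of Proposition \ref{p22} with $\be=\id_{\ti Y}$, one obtains $c_\eta=\sum_\si\la^{-1}_{\si,\si^{-1}}\pi_{\si^{-1}}^\si[\si]$, where $\pi_\tau:\ti Y\to Y^\tau$ is the canonical projection. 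My plan is to produce a section $s:\eta\to\Phi(\ti\eta)$, so that $\eta$ becomes a direct summand of $\Phi(\ti\eta)$.

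Using the separability element $\al\in\kZ(\dT)$ with $\tr\al=1$, and writing $\io_\tau:Y^\tau\to\ti Y$ for the canonical injections, I define
\[
 s=\sum_{\tau\in\sG}\io_\tau\al_{Y^\tau}[\tau]\ \in\ \kA\sG(Y,\ti Y).
\]
For $c_\eta s=\id_\eta$: the double sum $\sum_{\si,\tau}\la^{-1}_{\si,\si^{-1}}\pi^\si_{\si^{-1}}\io^\si_\tau\al^\si_{Y^\tau}\la_{\si,\tau}[\si\tau]$ collapses via the orthogonality $\pi_{\si^{-1}}\io_\tau=\delta_{\tau,\si^{-1}}\cdot 1$ (preserved by $T_\si$) to $\bigl(\sum_\si\la^{-1}_{\si,\si^{-1}}\al^\si_{Y^{\si^{-1}}}\la_{\si,\si^{-1}}\bigr)[1]=(\tr\al)_Y[1]=1_Y[1]$. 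For the morphism condition $s\eta=\ti\eta[1]\cdot s+\dd s$, the differential piece vanishes immediately ($\dd\al=0$ and $\dd\io_\tau=0$), and on both sides I use the centrality of $\al$—which allows commuting $\al$ past every $y_\rho^\tau\in\kB$ and every $\la_{\tau,\rho}\in\mor\kA$—to rewrite them as the common expression $\sum_\mu\sum_\rho\io_\rho\al_{Y^\rho}y^\rho_{\rho^{-1}\mu}\la_{\rho,\rho^{-1}\mu}[\mu]$.

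The main technical obstacle will be the bookkeeping of the system of factors $\la_{\si,\tau}$ and the twist functors $T_\si$ through these computations: on the $\ti\eta[1]\cdot s$ side one must expand $\ti\eta\io_\mu=\sum_\rho\io_\rho y^\rho_{\rho^{-1}\mu}\la_{\rho,\rho^{-1}\mu}$ from the definition of $\Psi$, then push $\al$ to the left past $\la$ and past $y$; on the $s\eta$ side one must make the symmetric pair of commutations and relabel $\tau\rho=\mu$. The identities $\al_Yx=x\al_X$ (centrality) together with $\dd\al=0$ are exactly what allow every $\la$ and every $T_\si$ to match up correctly and reduce the verification to the single identity $\tr\al=1$ in the end.
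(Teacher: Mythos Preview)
Your proposal is correct and follows essentially the same route as the paper: show that every $\eta\in\el(\dT\sG)$ is a direct summand of $\Phi\Psi\eta$ by exhibiting an explicit retraction--section pair, the section being $\sum_\tau\io_\tau\al_{Y^\tau}[\tau]$ built from the separability element. Your morphisms $c_\eta$ and $s$ coincide with the paper's $\pi$ and $\io$; the only difference is that you obtain $c_\eta$ as the adjunction counit from Proposition~\ref{p22}, which spares you the paper's direct verification that $\pi$ intertwines $\ti\xi[1]$ and $\xi$.
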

 \begin{proof}
 First we prove a lemma about fully additive categories.

 \begin{lemma}\label{l35}
 Let $\kC$ be a fully additive category, $F:\kC\to\kC'$ be a fully faithful
 functor. $F$ is an equivalence of categories \iff every object $X'\in\kC'$ 
 is isomorphic to a direct summand of an object of the form $FY$, where
 $Y\in\ob\kC$.
 \end{lemma}
 \begin{proof}
 The necessity of this condition is obvious, so we only have to prove the sufficiency.
 If $X'$ is a direct summand of $FY$, there are morphisms $\io':X'\to FY$ and $\pi':FY\to X'$
 such that $\pi'\io'=1_{X'}$. Then $e'=\io'\pi'$ is an idempotent endomorphism of the
 object $FY$. Since the functor $F$ is fully faithful, $e'=Fe$ for an idempotent
 endomorphism $e:Y\to Y$. Since the category $\kC$ is fully additive, there are an
 object $X$ and morphisms $\io:X\to Y$ and $\pi:Y\to X$ such that $e=\io\pi$ and
 $\pi\io=1_X$. Then $(F\io)(F\pi)=e'$ and $(F\pi)(F\io)=1_{FX}$. Let
 $u=\pi'F(\io),\ v=(F\pi)\io'$; then we immediately get that $uv=1_{X'}$ and $vu=1_{FX}$,
 i.e. $X'\simeq FX$, the functor $F$ is also dense, so it is an equivalence of categories.
 \end{proof}

 We prove now that every object $\xi$ of the category $\el(\dT\sG)$ is
 isomorphic to a direct summand of $\Phi\Psi\xi$. Since $\Phi$ is fully faithful
 (Proposition \ref{p21}), Theorem \ref{t34} follows then from Lemma \ref{l35}. Let
 $\xi=\sum_\si x_\si[\si]\in\kB\sG(X,X)$, where $x_\si\in\kB(X^\si,X)$.
 Then $\Psi\xi=\ti\xi\in\kB(\ti X,\ti X)$, where $\ti X=\bop_\si X^\si$ and
 $\ti\xi_{\si,\tau}=x^\si_{\si^{-1}\tau}\la_{\si,\si^{-1}\tau}$, and $\Phi\Psi\xi=\ti\xi[1]$.
 Choose an element $\al\in\kZ(\dT)$ such that $\tr\al=1$. Consider the morphism
 $\pi:\ti X\to X$ such that its $\si$-component equals
 $\pi_\si=\la^{-1}_{\si^{-1},\si}[\si^{-1}]:X^\si\to X$. Then the $\si$-component
 of the element $\xi\pi$ equals
 \[
  \sum_\rho x_\rho(\la^\rho_{\si^{-1},\si})^{-1}\la_{\rho,\si^{-1}}[\rho\si^{-1}]
 = \sum_\rho x_\rho\la^{-1}_{\rho\si^{-1},\si}[\rho\si^{-1}]
 \]
 (we use the relation \eqref{e21} for the triple $\rho,\si^{-1},\si$), while
 the $\si$-component of the element $\pi\ti\xi[1]$ equals
 \begin{align*}
 &\sum_\rho \la^{-1}_{\rho^{-1},\rho}(x^\rho_{\rho^{-1}\si})^{\rho^{-1}}
 \la_{\rho,\rho^{-1}\si}^{\rho^{-1}}[\rho^{-1}]=
 \sum_\rho x_{\rho^{-1}\si}\la^{-1}_{\rho,\rho^{-1}}\la^{\rho^{-1}}_{\rho,\rho^{-1}\si}[\rho^{-1}]
 =\\&\ =\sum_{\rho} x_{\rho^{-1}\si}\la^{-1}_{\rho^{-1},\si}[\rho^{-1}] =
 \sum_\rho x_\rho\la^{-1}_{\rho\si^{-1},\si}[\rho\si^{-1}].
 \end{align*}
 Here we used first the relation \eqref{e21} for the triple $\rho^{-1},\rho,\rho^{-1}\si$ and
 then replaced $\rho$ by $\si\rho^{-1}$. So $\xi\pi=\pi\ti\xi[1]$ and, since $\dd\pi=0$,
 $\pi$ is a morphism $\ti\xi[1]\to\xi$. Now consider the morphism $\io:X\to\ti X$ such
 that its $\si$-component equals $\al_{X^\si}[\si]$. The $\si$-component of the
 element $\io\xi$ equals
 \[
  \sum_\rho \al_{X^\si} x^\si_\rho \la_{\si,\rho}[\si\rho]=
 \sum_\rho \al_{X^\si} x^\si_{\si^{-1}\rho}\la_{\si,\si^{-1}\rho}[\rho],  
 \]
 and the $\si$-component of the element $\ti\xi[1]\io$ equals
 \[
  \sum_\rho x^\si_{\si^{-1}\rho}\la_{\si,\si^{-1}\rho}\al_{X^\rho}[\rho]=
   \sum_\rho\al_{X^\si}x^\si_{\si^{-1}\rho}\la_{\si,\si^{-1}\rho} [\rho],
 \]
 since $\al\in\kZ(\dT)$. Therefore, $\ti\xi[1]\io=\io\xi$, thus $\io$ is a
 morphism $\xi\to\ti\xi[1]$. But $\pi\io=\sum_\si\la^{-1}_{\si^{-1},\si}
 \al_{X^\si}^{\si^{-1}}\la_{\si^{-1},\si}=(\tr\al)_X=1_X=1_\xi$, which just 
 means that the element $\xi$ is a direct summand of the element $\ti\xi[1]$.
 \end{proof}

 One can get more information if the group $\sG$ is finite abelian and the ring
 $\bK$ is a field containing a \emph{primitive $n$-th root of unit}, where
 $n=\#(\sG)$, i.e. such an element $\ze$ that $\ze^n=1$ and $\ze^k\ne1$
 for $0<k<n$. Then certainly $\chr\bK\nmid n$, so any action of the group $\sG$ 
 on a bimodule triple $\dT=(\kA,\kB,\dd)$ is separable.
 
 Let $\hG$ be the \emph{group of characters} of the group $\sG$, i.e. the group of
 its homomorphisms to the multiplicative group $\bK^\xx$ of the field $\bK$. 
 This group acts on the triple $\dT\sG$ (with the trivial system of factors) by the rules:
 \begin{align*}
  & X^\chi=X \ \text{ for every }\ X\in\ob\kA,\\ 
  &\big(\sum_\si x_\si[\si]\big)^\chi=\sum_\si \chi(\si)x_\si[\si],
 \end{align*}
 where $\chi\in\hG$ and $\sum_\si x_\si[\si]$ is a morphism from $\kA\sG$ or
 an element from $\kB\sG$. Recall that also $\#(\hG)=n$, so this action is separable
 as well. We denote by $\chi_0$ the \emph{unit character}, i.e. such that $\chi_0(\si)=1$ 
 for all $\si\in\sG$. By definition, morphisms from $\kA\sG\hG$ and elements of $\kB\sG\hG$
 are of the form $\sum_{\si,\chi} x_{\si,\chi}[\si][\chi]$. We write $[\chi]$ instead of
 $[1][\chi]$ and $\si$ instead of $[\si][\chi_0]$. In particular an element
 $x[1][\chi_0]$ is denoted by $x$. 

 \begin{theorem}\label{t36}
 The bimodule triples $\add\dT$ and $\add\dT\sG\hG$ are equivalent.
 \end{theorem}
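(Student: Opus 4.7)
The plan is to invoke Lemma~\ref{l11} by constructing a ``Morita-type'' bifunctor $F=(F_0,F_1):\add\dT\to\add\dT\sG\hG$. Set
\[
 e_X=\frac{1}{n}\sum_{\chi\in\hG}[\chi]\in\kA\sG\hG(X,X);
\]
since $\hG$ acts on $\dT\sG$ with the trivial system of factors, $[\chi][\psi]=[\chi\psi]$, so $e_X^2=e_X$, and $\dd e_X=0$ because $\dd[\chi]=0$. Define $F_0(X)=(X,e_X)\in\ob\add\kA\sG\hG$, $F_0(a)=ae_X$ for $a\in\kA(X,Y)$, and $F_1(x)=xe_X$ for $x\in\kB(X,Y)$, extended additively. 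Compatibility with $\dd$ is automatic.

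For conditions (1) and (3) of Lemma~\ref{l11}, the key computation is that for $a\in\kA(X^\si,Y)$ and $\chi\in\hG$,
\[
 e_Y\cdot a[\si][\chi]\cdot e_X \;=\; \delta_{\si,1}\,a\,e_X,
\]
obtained by commuting $[\psi]$ past $a[\si]$ via the $\hG$-action $[\psi]\cdot a[\si]=\psi(\si)a[\si][\psi]$ and applying character orthogonality $\sum_\psi\psi(\si)=n\delta_{\si,1}$. Hence $a\mapsto ae_X$ is a bijection $\kA(X,Y)\ito e_Y\kA\sG\hG(X,Y)e_X$, and the same argument gives the analogous bijection for $\kB$.

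The main step is $\dd$-density of $F_0$. Introduce the primitive orthogonal idempotents
\[
 e_{\si,X}=\frac{1}{n}\sum_{\chi\in\hG}\chi(\si^{-1})[\chi]\in\kA\sG\hG(X,X)\qquad(\si\in\sG),
\]
which satisfy $\sum_\si e_{\si,X}=1_X$ and $\dd e_{\si,X}=0$, by character orthogonality on $\hG$ and $\sG$; note $e_{1,X}=e_X$. Using the commutation $[\chi]\cdot[\si^{-1}]=\chi(\si^{-1})[\si^{-1}]\cdot[\chi]$ inside $\kA\sG\hG$, one verifies
\[
 e_{1,X^{\si^{-1}}}\cdot[\si^{-1}]\;=\;[\si^{-1}]\cdot e_{\si,X},
\]
so $[\si^{-1}]$ restricts to a morphism $(X,e_{\si,X})\to F_0(X^{\si^{-1}})$, and symmetrically $[\si]$ restricts to a morphism in the opposite direction. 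Their compositions equal $\la_{\si,\si^{-1}}$ and $\la_{\si^{-1},\si}$ times the relevant identities; since these factors are central, $\dd$-closed, and invertible in $\kA$, the restriction of $[\si^{-1}]$ is an isomorphism with $\dd=0$. Thus $X\cong F_0(\bigoplus_\si X^\si)$ via a $\dd$-closed isomorphism, and since every object of $\add\kA\sG\hG$ is obtained by splitting a $\dd$-closed idempotent of some $X\in\ob\kA$ (and $F_0$ is fully faithful between fully additive categories), every object of $\add\kA\sG\hG$ is $\dd$-isomorphic to $F_0(Y)$ for some $Y\in\ob\add\kA$. Lemma~\ref{l11} then yields the equivalence.

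The main obstacle is the bookkeeping for the interaction of $[\si]$ and $[\chi]$ inside $\kA\sG\hG$ and the residual appearance of the factors $\la_{\si,\si^{-1}}$ from the $\sG$-action. Once the commutation $[\chi][\tau]=\chi(\tau)[\tau][\chi]$ is pinned down, and one remembers that the $\hG$-action has trivial system of factors so no new ones are introduced, both the full-faithfulness calculation and the construction of the isomorphisms $(X,e_{\si,X})\cong F_0(X^{\si^{-1}})$ reduce to routine applications of character orthogonality.
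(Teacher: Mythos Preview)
Your proof is correct and follows essentially the same approach as the paper: both define the bifunctor via the idempotent $e_1=\frac{1}{n}\sum_\chi[\chi]$, verify full faithfulness and bijectivity on $\kB$ via character orthogonality, and obtain ($\dd$-)density from the conjugacy relation $e_\si[\tau]=[\tau]e_{\si\tau}$ (the paper packages this last step as an application of Lemma~\ref{l35}, while you spell out the isomorphisms $(X,e_{\si,X})\simeq F_0(X^{\si^{-1}})$ explicitly). One small imprecision: the factors $\la_{\si,\si^{-1}}$ are not literally ``central'' (they are morphisms $X\to(X^{\si^{-1}})^\si$, not endomorphisms), but they are natural, invertible, and $\dd$-closed, which is all that is needed for your argument.
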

 \begin{proof}
 Consider the elements $\ds e_\si=\frac1n\sum_\chi\chi(\si)[\chi]$ from the endomorphism
 ring $\kA\sG\hG(X,X)$. The formulae of orthogonality for characters \cite[Theorem 3.5]{dk} 
 immediately imply that $e_\si$ are mutually orthogonal idempotents and $\sum_\si e_\si=1$.
 Moreover, $e_\si[\tau]=[\tau]e_{\si\tau}$, so all these idempotents are conjugate, thus define
 isomorphic direct summands $X_\si$ of the object $X$ in  the category $\add\kA\sG\hG$,
 and $X=\bop_\si X_\si$. We define the bifunctor $\Th:\add\dT\to\add\dT\sG\hG$ setting
 $\Th X=X_1$ and $\Th x=xe_1=e_1x$, where $x$ is a morphism $X\to Y$ or an element
 from $\kB(X,Y)$. Obviously, the functor $\Th_0:\add\kA\to\add\kA\sG\hG$ satisfies the
 conditions of Lemma \ref{l35}, so it defines an equivalence of categories. Since every
 map $\Th_1(X,Y)$ is also bijective, the bifunctor $\Th$ is an equivalence by Lemma \ref{l11}.
 \end{proof}

 \begin{corol}\label{c37}
 The categories  $\el(\dT)$ and $\add\el(\dT)\sG\hG$ are equivalent.
 \end{corol}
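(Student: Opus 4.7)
The plan is to chain Theorem~\ref{t36} with two applications of Theorem~\ref{t34}. Because $\bK$ contains a primitive $n$-th root of unity with $n=\#(\sG)=\#(\hG)$, its characteristic does not divide $n$, so the $\sG$-action on $\dT$ and the induced $\hG$-action on $\dT\sG$ are both separable (as noted in the discussion following Definition~\ref{d31}).

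First, pass to bimodule categories in Theorem~\ref{t36}: since an equivalence of bimodule triples induces an equivalence of their bimodule categories (Section~\ref{s1}), and $\el$ already incorporates passage to the additive hull, we get
$$
\el(\dT) \;\simeq\; \el(\dT\sG\hG).
$$
Next, Theorem~\ref{t34} applied to the separable action of $\sG$ on $\dT$ yields $\add\el(\dT)\sG \simeq \el(\dT\sG)$, and applied to the separable action of $\hG$ on $\dT\sG$ yields $\add\el(\dT\sG)\hG \simeq \el(\dT\sG\hG)$.

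To combine these, I would observe that the first equivalence is $\hG$-equivariant for the natural $\hG$-actions on both sides (both actions descend from the $\hG$-action on $\dT\sG$, and the functors $\Phi,\Psi$ of Section~\ref{s2} are defined by formulas that are manifestly natural in any further automorphism of $\dT\sG$). Hence it promotes to an equivalence of crossed group categories $(\add\el(\dT)\sG)\hG \simeq \el(\dT\sG)\hG$. Taking additive hulls and using that $\add$ is idempotent up to equivalence, in the sense that $\add((\add\kC)\hH)\simeq\add(\kC\hH)$ for any $\hH$-category $\kC$ (a routine consequence of the universal property of $\add$), we conclude
$$
\add\el(\dT)\sG\hG \;\simeq\; \add((\add\el(\dT)\sG)\hG) \;\simeq\; \add\el(\dT\sG)\hG \;\simeq\; \el(\dT\sG\hG) \;\simeq\; \el(\dT).
$$

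The main technical obstacle is the bookkeeping around the nested $\add$ and the crossed group constructions: one must verify both the $\hG$-equivariance of the equivalence from Theorem~\ref{t34} and the compatibility of $\add$ with the crossed group construction. Neither should cause real trouble once written out, since the functors $\Phi$ and $\Psi$ are given by explicit formulas that respect any further group action, and the additive-hull identity follows from the universal property of $\add$.
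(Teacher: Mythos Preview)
Your proposal is correct and follows essentially the same route as the paper: the paper's proof is the single line ``$\add\el(\dT)\sG\hG\simeq\el(\dT\sG\hG)$ by Theorem~\ref{t34}'', which together with the implicit use of Theorem~\ref{t36} (to get $\el(\dT)\simeq\el(\dT\sG\hG)$) is exactly your chain of equivalences. The paper simply leaves the two applications of Theorem~\ref{t34}, the $\hG$-equivariance of $\Phi$, and the $\add$ bookkeeping to the reader, while you have spelled these out.
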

 \begin{proof}
  Indeed, $\add\el(\dT)\sG\hG\simeq\el(\dT\sG\hG)$ by Theorem \ref{t34}.
 \end{proof}

 \section{Radical and decomposition}
 \label{s4}

 In this section we suppose that the ring $\bK$ is \emph{noetherian, local and henselian}
 \cite{bo} (for instance, \emph{complete}). We denote by $\gM$ its maximal ideal and by
 $\gK=\bK/\gM$ its residue field. We call a $\bK$-category $\kA$ \emph{piecewise finite}
 if all $\bK$-modules $\kA(X,Y)$ are finitely generated. Then its additive hull $\add\kA$
 is piecewise finite as well. Moreover, each endomorphism ring $A=\kA(X,X)$ is
 \emph{semiperfect}, i.e. possesses a unit decomposition $1=\sum_{i=1}^ne_i$, 
 where $e_i$ are mutually orthogonal idempotents and all rings $e_iAe_i$ are local.
 Hence the category $\add\kA$ is \emph{local}, i.e. every object in it decomposes into
 a finite direct sum of objects with local endomorphism rings. Therefore this category is
 a \emph{Krull--Schmidt category}, i.e. every object $X$ in it decomposes into a finite
 direct sum of indecomposables: $X=\bop_{i=1}^mX_i$ and such a decomposition is
 unique, i.e. if also $X=\bop_{i=1}^nX'_i$, where all $X'_i$ are indecomposable, then
 $m=n$ and there is a permutation $\eps$ of the set $\set{1,2,\dots,m}$ such that 
 $X_i\simeq X'_{\eps i}$ for all $i$ \cite[Theorem I.3.6]{ba}.
 Recall that the \emph{radical} of a local category $\kA$ is the ideal $\rad\kA$ 
 consisting of all such morphisms $a:X\to Y$ that all components of $a$ with respect
 to some (then any) decompositions of $X$ and $Y$ into a direct sum of indecomposables
 are non-invertible. We denote $\lA=\kA/\rad\kA$. In particular, $\rad\kA(X,X)$ is the
 radical of the ring $\kA(X,X)$ and $\lA(X,X)$ is a semisimple artinian ring \cite{gr}.
 In the case of a piecewise finite category always $\rad\kA\spe\gM\kA$, in particular,
 $\lA(X,X)$ is a finite dimensional $\gK$-algebra. The category $\lA$ is \emph{semisimple},
 i.e. every object in it decomposes into a finite direct sum of indecomposables and
  $\lA(X,Y)=0$ if $X$ and $Y$ are non-isomorphic indecomposables, while $\lA(X,X)$ is
 a skewfield for every indecomposable object $X$. (Note that an object $X$ is indecomposable
 in the category $\kA$ \iff it is so in the category $\lA$). Moreover, $\rad\kA$ is the biggest
 among the $\kI\subset\kA$ such that the factor-category $\kA/\kI$ is semisimple.

 If a finite group $\sG$ acts on a piecewise finite category $\kA$ with a system of factors
 $\la$, the category $\kA\sG$ is piecewise finite as well. Moreover, the radical is a
 $\sG$-invariant ideal, i.e. $(\rad\kA)^\si=\rad\kA$ for all $\si\in\sG$, and the ideal
 $(\rad\kA)\sG$ is contained in the radical of the category $\kA\sG$. 

 \begin{prop}\label{p41}
 If the action of a group $\sG$ on a category $\kA$ is separable, so is also its induced
 action on the category $\lA\sG$. In this case $\rad(\kA\sG)=(\rad\kA)\sG$ and the category
  $\lA\sG$ is semisimple.
 \end{prop}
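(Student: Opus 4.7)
First I would unpack the three claims separately. The paragraph preceding the proposition already records that $\rad\kA$ is $\sG$-invariant, so the $\sG$-action descends to the quotient $\lA=\kA/\rad\kA$. The quotient functor induces a homomorphism of $\bK$-algebras $\kZ(\kA)\to\kZ(\lA)$ that intertwines the two $\sG$-actions (with their trivial systems of factors, as constructed in Section 3). Hence if $\al\in\kZ(\kA)$ satisfies $\tr_\sG\al=1$, its image $\bar\al\in\kZ(\lA)$ satisfies $\tr_\sG\bar\al=\overline{\tr_\sG\al}=1$. This gives separability of the induced action on $\lA$.

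Next I would identify the quotient category. Because $\rad\kA$ is a $\sG$-invariant ideal, $(\rad\kA)\sG$ is a two-sided ideal of $\kA\sG$ and the obvious map induces an isomorphism $\kA\sG/(\rad\kA)\sG\ito\lA\sG$. The inclusion $(\rad\kA)\sG\sbe\rad(\kA\sG)$ has been stated just before the proposition. Since $\rad(\kA\sG)$ is the largest ideal whose quotient is semisimple, it will suffice to show that $\lA\sG$ is semisimple; then equality $\rad(\kA\sG)=(\rad\kA)\sG$ follows for free.

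The semisimplicity of $\lA\sG$ is the main content, and I would prove it by a Maschke-style averaging argument at the categorical level. By Corollary \ref{c33}, separability of the induced action on $\lA$ means that the embedding $\lA\to\lA\sG$ is a separable functor: for each $X$ there is $t_X\in(\lA\sG\*_\lA\lA\sG)(X,X)$, say $t_X=\sum_i b_i\*c_i$ with $b_i:Y_i\to X$, $c_i:X\to Y_i$ in $\lA\sG$, satisfying $\sum_ib_ic_i=1_X$ and $at_X=t_Ya$ for every $a\in\lA\sG(X,Y)$. Given any $\lA\sG$-module $M$ and submodule $N\sbe M$, the semisimplicity of $\lA$ provides a $\lA$-linear retraction $p:M\to N$. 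Setting
\[
 \tilde p_X(m)=\sum_i b_i\,p(c_im),\qquad m\in M(X),
\]
one checks, using the two defining properties of $t_X$, that $\tilde p=\{\tilde p_X\}$ is an $\lA\sG$-linear retraction of $N\hookrightarrow M$. Hence every $\lA\sG$-module is semisimple, which forces $\rad(\lA\sG)=0$, i.e., $\lA\sG$ is a semisimple category.

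I expect the Maschke step to be the main obstacle, since the separability element depends on the object $X$ and one must verify that $\tilde p$ is compatible with every morphism $a\in\lA\sG(X,Y)$, not merely with morphisms from the subcategory $\lA$. But this is exactly what the relation $at_X=t_Ya$ supplies: it converts the $\lA$-linearity of $p$ into $\lA\sG$-linearity of $\tilde p$ by a direct substitution. The rest of the argument is formal: deduce $\rad(\lA\sG)=0$ from the fact that every short exact sequence of $\lA\sG$-modules splits, and then combine with the second paragraph to conclude both $\rad(\kA\sG)=(\rad\kA)\sG$ and the semisimplicity of $\lA\sG$.
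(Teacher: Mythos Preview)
The paper gives no proof beyond ``\emph{Proof is evident}''. Your plan supplies exactly the details one would expect and is correct in substance, so there is nothing to compare strategically.

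One wording slip to fix: you invoke that $\rad(\kA\sG)$ is ``the largest ideal whose quotient is semisimple''. That is the wrong direction (and the paper itself contains the same misstatement a few lines before the proposition). What you actually need, and what your argument really uses, is that the radical is the \emph{smallest} such ideal: any ideal $\kI$ with $\kA\sG/\kI$ semisimple satisfies $\kI\supseteq\rad(\kA\sG)$. With this correction the chain stands: separability descends along the $\sG$-equivariant map $\kZ(\kA)\to\kZ(\lA)$; then $\kA\sG/(\rad\kA)\sG\simeq\lA\sG$; the Maschke averaging with the separability element from Corollary~\ref{c33} makes every short exact sequence of $\lA\sG$-modules split, so each endomorphism algebra $\lA\sG(X,X)$ is semisimple artinian and hence $\rad(\lA\sG)=0$; combined with the already-stated inclusion $(\rad\kA)\sG\subseteq\rad(\kA\sG)$ this gives both the equality $\rad(\kA\sG)=(\rad\kA)\sG$ and the semisimplicity of $\lA\sG$.
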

 \pev. 

 From now on, we suppose that $\kA$ is a piecewise finite local $\bK$-category,
 $\kR=\rad\kA$, $X\in\ob\kA$ is an indecomposable object from $\kA$,
 $\bA=\kA(X,X)$ and $\sG$ is a finite group acting on $\kA$ with a system of factors
 $\la$ so that its action is separable. We are interested in the decomposition of the
 object $X$ in the category $\kA\sG$ into a direct sum of indecomposables, especially,
 the number $\nu_\sG(X)$ of non-isomorphic summands in such a decomposition. 
 Recall that such decomposition comes from a decomposition of the ring
 $\kA\sG(X,X)$ or, equivalently, of the ring $\lA\sG(X,X)$ into a direct sum of
 indecomposable modules. 

 \begin{prop}\label{p42}
 Let $\sH=\setsuch{\si\in\sG}{X^\si\simeq X}$. Then 
 \[
   \kA\sG(X,X)/\kR\sG(X,X) \simeq\kA\sH(X,X)/\kR\sH(X,X), 
 \]
 in particular, $\nu_\sG(X)=\nu_\sH(X)$.
 \end{prop}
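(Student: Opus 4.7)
The plan is to exhibit a ring isomorphism between the two quotients by restricting to the subgroup $\sH$ and invoking the Krull--Schmidt structure on $\lA$. The key observation is that, since $X$ is indecomposable and each $T_\si$ is an equivalence of categories, every $X^\si$ is indecomposable as well; for $\si\notin\sH$ we have $X^\si\not\simeq X$, so $\lA(X^\si,X)=0$, which means $\kA(X^\si,X)=\kR(X^\si,X)$.

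First I would note that the inclusion $\sH\hookrightarrow\sG$, together with the restriction of the cocycle $\la$, realises $\kA\sH(X,X)$ as a subring of $\kA\sG(X,X)$: for $\si,\tau\in\sH$ the product rule \eqref{e23} is identical in both. Composing this inclusion with the projection onto $\kA\sG(X,X)/\kR\sG(X,X)$ gives a ring homomorphism $f$ whose kernel is precisely $\kR\sH(X,X)$, since an element $\sum_{\si\in\sH}a_\si[\si]$ lies in $\kR\sG(X,X)$ if and only if each $a_\si\in\kR(X^\si,X)$, if and only if it already lies in $\kR\sH(X,X)$.

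For the surjectivity of $f$, given $\sum_{\si\in\sG}a_\si[\si]\in\kA\sG(X,X)$, the key observation forces $a_\si\in\kR(X^\si,X)$ for every $\si\notin\sH$; hence the class modulo $\kR\sG(X,X)$ is represented by the truncated sum $\sum_{\si\in\sH}a_\si[\si]$, which lies in $\kA\sH(X,X)$. Therefore $f$ descends to a ring isomorphism $\kA\sH(X,X)/\kR\sH(X,X)\ito\kA\sG(X,X)/\kR\sG(X,X)$, which is the first claim of the proposition.

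For the equality $\nu_\sG(X)=\nu_\sH(X)$, I invoke Proposition \ref{p41}: separability of the action of $\sG$ is inherited by $\sH$, so both $\kR\sG(X,X)$ and $\kR\sH(X,X)$ are the Jacobson radicals of the corresponding semiperfect endomorphism rings, and the quotients are semisimple artinian. The number of non-isomorphic indecomposable summands of $X$ on either side equals the number of Wedderburn components of that semisimple quotient, and this is preserved by the isomorphism just constructed. I do not foresee a real obstacle; the only delicate point is the compatibility of multiplication on the subring $\kA\sH(X,X)\subseteq\kA\sG(X,X)$, which is immediate because $\sH$ is closed under multiplication and any product with a factor outside $\sH$ falls automatically into $\kR\sG(X,X)$.
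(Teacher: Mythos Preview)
Your argument is correct and is precisely the approach the paper has in mind: the paper's entire proof is the one-line remark that $a_\si\in\kR$ whenever $\si\notin\sH$, and you have simply spelled out how this observation yields the ring isomorphism and the equality $\nu_\sG(X)=\nu_\sH(X)$. Nothing further is needed.
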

 \pev{, since $a_\si\in\kR$ for every morphism $a_\si:X^\si\to X$ if 
 $\si\notin\sH$.}

 \begin{corol}\label{c42}
 If $X^\si\not\simeq X$ for all $\si\in\sG$, the object $X$ remains indecomposable
 in the category $\kA\sG$.
 \end{corol}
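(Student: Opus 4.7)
The plan is to apply Proposition \ref{p42} with the stabilizer trivialised, and then read off indecomposability from the residue of the endomorphism ring.

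I first interpret the hypothesis as saying $X^\si\not\simeq X$ for all $\si\ne1$, since $X^1=X$ automatically. Thus the stabilizer $\sH=\setsuch{\si\in\sG}{X^\si\simeq X}$ introduced in Proposition \ref{p42} reduces to $\set{1}$, so $\kA\sH=\kA$, $\kR\sH=\kR$, and the quotient $\kA\sH(X,X)/\kR\sH(X,X)$ equals the skew field $\bA/\rad\bA$ (recall $\bA=\kA(X,X)$ is local because $X$ is indecomposable in $\kA$).

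By Proposition \ref{p42}, this same skew field is then isomorphic to $\kA\sG(X,X)/\kR\sG(X,X)$. Separability of the action lets me invoke Proposition \ref{p41} to identify $\kR\sG$ with $\rad(\kA\sG)$, so I will have shown that the Jacobson radical quotient of the endomorphism ring $\kA\sG(X,X)$ is a skew field.

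Finally, under the henselian and piecewise-finite hypotheses in force in this section, $\kA\sG(X,X)$ is semiperfect, hence idempotents lift uniquely from the radical quotient. A skew field has only trivial idempotents, so the same must hold in $\kA\sG(X,X)$, which is therefore local, and $X$ is indecomposable in $\kA\sG$. The only delicate point is ensuring that Proposition \ref{p42} yields a skew field rather than a larger simple artinian ring --- this is exactly what also rules out a decomposition of $X$ into several mutually isomorphic indecomposable summands in $\kA\sG$.
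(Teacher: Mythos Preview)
Your argument is correct and is precisely the intended one: the paper states Corollary \ref{c42} without proof as an immediate consequence of Proposition \ref{p42} (with $\sH=\{1\}$) combined with Proposition \ref{p41}, and you have simply spelled out the details, including the point that the full ring isomorphism (not merely $\nu_\sG(X)=\nu_\sH(X)$) is what rules out a decomposition into several isomorphic pieces. One small wording issue: rather than ``idempotents lift uniquely'', the fact you actually need is that any nontrivial idempotent of $\kA\sG(X,X)$ remains nontrivial modulo the radical, which is immediate since an idempotent lying in the Jacobson radical must vanish.
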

 
 Therefore, dealing with the decomposition of $X$, we can only consider the action of
 the subgroup $\sH$. For every $\si\in\sH$ we fix an isomorphism $\phi_\si:X^\si\to X$
 and consider the action $T'$ of the group $\sH$ on the ring $\bA$ given by the rule
 $T'_\si(a)=\phi_\si a^\si\phi_\si^{-1}$. One easily verifies that the elements
 $\la'_{\si,\tau}=\phi_\si\phi^\si_\tau\la_{\si,\tau}\phi_{\si\tau}^{-1}$ form a system
 of factors for this action, moreover, the map $a[\si]\mapsto a\phi_\si[\si]$ 
 establishes an isomorphism $\bA(\sH,T',\la')\simeq\kA\sH(X,X)$. Thus, in what follows,
 we investigate the algebras $\bA(\sH,T',\la')$ and $\bD(\sH,T',\ola)$, where
 $\bD=\bA/\rad\bA$ and $\ola_{\si,\tau}$ denotes the image of $\la'_{\si,\tau}$
 in the skewfield $\bD$. The latter factor-ring is finite dimensional skewfield (division
 algebra) over the field $\gK$. We denote by $\bF$ the center if this algebra (it is a field).
 Let $\sN$ be the subgroup of $\sH$ consisting of all elements $\si$ such that the
 automorphism $T'_\si$ induces an inner automorphism of the skewfield $\bD$, or, 
 equivalently, the identity automorphism of the field $\bF$ \cite[Corollary IV.4.3]{dk}. 
 It is a normal subgroup in $\sH$. For every element $\rho\in\sN$ we choose an element
 $d_\rho\in\bD$ such that $T'_\rho(a)=d_\rho ad_\rho^{-1}$ for all $a\in\bD$. We also
 choose a set $\kS$ of representatives of cosets $\sH/\sN$ and, for every $\si\in\sH$,
 denote by $\lsi$ the element from $\kS$ such that $\si\sN=\lsi\sN$, and by $\rho(\si)$
 the element from $\sN$ such that $\si=\rho(\si)\lsi$. Now we set
 $D_\si(a)=d_{\rho(\si)}^{-1}T'_{\si}(a)d_{\rho(\si)}$. An immediate verification shows
 that we get in this way an  action of the group $\sH$ on the skewfield $\bD$ with the
 system of factors
 $\mu_{\si,\tau}=d_{\rho(\si)}^{-1}(d_{\rho(\tau)}^\si)^{-1} \ola_{\si,\tau}d_{\rho(\si\tau)}$
 and, besides, the map $[\si]\mapsto d_{\rho(\si)}[\si]$ induces an isomorphism
 $\bD(\sH,T',\ola)\simeq\bD(\sH,D,\mu)$. Note that now 
 \[
  \sN=\setsuch{\si\in\sH}{D_\si=\id}=\setsuch{\si\in\sH}{D_\si|_\bF=\id}. 
 \]
 Moreover, one easily sees that $\mu_{\si,\tau}\in\bF$ if $\si,\tau\in\sH$. 

 Further on we denote $\bD\sH=\bD(\sH,D,\mu)$. The number of non-isomorphic
 indecomposable summands in the decomposition of $\bD\sH$ equals the number of
 simple components of this algebra \cite[Theorem II.6.2]{dk}, or, the same, the number
 of simple components of its center.
 
 \begin{prop}\label{p43}
 The center of the algebra $\bD\sH$ coincides with the set
 \begin{align*}
  &(\bF\sN)^\sH=\setsuch{\al\in\bF\sH}{\LA\tau\, [\tau]\al=\al[\tau]} =\\
  &\ = \Big\{\sum_{\si\in\sN}a_\si[\si]\,\Big|\, \LA\si \big(a_\si\in\bF\, \&\, \LA\tau(\tau\in\sH \Arr 
  a_\si^\tau\mu_{\tau,\si}=a_{\tau\si\tau^{-1}}\mu_{\tau\si\tau^{-1},\tau})\big)\Big\}.  
 \end{align*}
 Especially, if $\sN=\set1$, then $\bD\sH$ is a central simple algebra over the field of
 invariants $\bF^\sH$, hence, $\nu_\sG(X)=1$.%
 \footnote{\ The last statement is well-known, see \cite[Theorem 4.50]{jac}.}
 \end{prop}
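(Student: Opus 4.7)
My plan is to compute the center directly by imposing commutation with a generating set of the crossed-product algebra $\bD\sH$, namely with the scalars $\bD$ (embedded as $b\mapsto b[1]$) and with the distinguished elements $[\tau]$, $\tau\in\sH$. The first round of commutation localizes the center inside $\bF\sN$; the second round yields the explicit cocycle identity.

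I would first write a putative central element as $\al=\sum_{\si\in\sH}a_\si[\si]$ with $a_\si\in\bD$. For $b\in\bD$, using $\mu_{\si,1}=\mu_{1,\si}=1$, the identity $b[1]\cdot\al=\al\cdot b[1]$ becomes, componentwise, $ba_\si=a_\si D_\si(b)$. Since $\bD$ is a skewfield, a nonzero $a_\si$ is invertible, so the relation presents $D_\si$ as the inner automorphism $b\mapsto a_\si^{-1}ba_\si$; its restriction to $\bF$ is therefore trivial, and by the characterization $\sN=\setsuch{\si\in\sH}{D_\si|_\bF=\id}$ this forces $\si\in\sN$. For $\si\in\sN$ we have $D_\si=\id$ on all of $\bD$, so the equation reduces to $ba_\si=a_\si b$ for every $b$, hence $a_\si\in\bF$. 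Thus the center is contained in $\bF\sN$. This is the only genuinely subtle step: one must pass from invertibility of $a_\si$ in the skewfield to innerness of $D_\si$ on $\bD$ and thence to triviality on the center $\bF$.

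Next, for any $\al=\sum_{\si\in\sN}a_\si[\si]\in\bF\sN$, commutation with $\bD$ is automatic (again $D_\si|_\bD=\id$ for $\si\in\sN$ and $a_\si\in\bF$), so centrality in $\bD\sH$ is equivalent to $[\tau]\al=\al[\tau]$ for every $\tau\in\sH$. Comparing the coefficient of $[\pi]$ on both sides, the left side contributes $a^\tau_{\tau^{-1}\pi}\mu_{\tau,\tau^{-1}\pi}$ and the right side $a_{\pi\tau^{-1}}\mu_{\pi\tau^{-1},\tau}$. Reindexing by $\si=\tau^{-1}\pi$, and using the normality of $\sN$ in $\sH$ to guarantee $\tau\si\tau^{-1}\in\sN$, this becomes
\[
 a_\si^\tau\mu_{\tau,\si}=a_{\tau\si\tau^{-1}}\mu_{\tau\si\tau^{-1},\tau}\qquad(\LA \si\in\sN,\ \LA\tau\in\sH),
\]
which is exactly the displayed description.

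For the ``Especially'' clause, substituting $\sN=\set1$ collapses $\al$ to $a_1[1]$, and the relation above reduces, by $\mu_{1,\tau}=\mu_{\tau,1}=1$, to $a_1^\tau=a_1$ for all $\tau\in\sH$; thus the center equals $\bF^\sH$. To upgrade from ``center $=\bF^\sH$'' to ``$\bD\sH$ is central simple over $\bF^\sH$'' (hence indecomposable, giving $\nu_\sG(X)=1$), I would invoke the classical crossed-product theorem cited in the footnote: when $\sN=\set1$ the action of $\sH$ on $\bF$ is faithful and $\mu\in Z^2(\sH,\bF^\xx)$, so $\bD\sH$ is the usual crossed product of the skewfield $\bD$ with the Galois-type group $\sH$, and is therefore central simple over its fixed field $\bF^\sH$.
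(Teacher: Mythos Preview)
Your proof is correct and follows essentially the same route as the paper: commute a putative central element first with scalars $b\in\bD$ to force the support into $\sN$ and the coefficients into $\bF$, then with the $[\tau]$'s to obtain the cocycle identity. Your treatment of the ``Especially'' clause simply spells out what the paper leaves to the cited reference.
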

 \begin{proof}
 If an element $\al=\sum_\si a_\si[\si]$ belongs to the center of $\bD\sH$, then 
 $\sum_\si ba_\si[\si]=\sum_\si a_\si[\si]b=\sum_\si a_\si b^\si[\si]$, so if
 $a_\si\ne0$, then $b^\si=a_\si^{-1} ba_\si$, hence, $\si\in\sN,\ b^\si=b$ and $a_\si\in\bF$.
 Finally, the equalities $[\tau] \al=\sum_\si a^\tau_\si \mu_{\tau,\si}[\tau\si]=\al[\tau]=
 \sum_\si a_\si\mu_{\si,\tau}[\si\tau]=\sum_\si a_{\tau\si\tau^{-1}}
 \mu_{\tau\si\tau^{-1},\tau}[\tau\si]$ complete the proof. 
 \end{proof}
 
 \begin{corol}\label{c44}
  If $\bF=\gK$ (for instance, the residue field $\gK$ is algebraically closed)
 and the group $\sH$ is abelian, the center of the algebra $\bD\sH$ coincides with
 $\gK\sH_0$, where $\sH_0$ is the subgroup of $\sH$ consisting of all elements
 $\si$ such that $\mu_{\si,\tau}=\mu_{\tau,\si}$ for all $\tau\in\sH$. In particular,
 $\nu_\sG(X)=\#(\sH_0)$.
 \end{corol}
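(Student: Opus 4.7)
The plan is to specialise Proposition \ref{p43} to the hypotheses $\bF=\gK$ and $\sH$ abelian, and then read off the number of indecomposable summands from the size of the centre.

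First I would verify that $\sN=\sH$. Using the alternative characterisation $\sN=\setsuch{\si\in\sH}{D_\si|_\bF=\id}$, the fact that every $\gK$-algebra automorphism of $\bD$ fixes the ground field $\gK=\bF$ pointwise makes the restriction condition automatic, so $\sN=\sH$. Proposition \ref{p43} then presents $\kZ(\bD\sH)$ as the set of formal combinations $\sum_{\si\in\sH}a_\si[\si]$ with $a_\si\in\gK$ subject to
\[
 a_\si^\tau\,\mu_{\tau,\si}=a_{\tau\si\tau^{-1}}\,\mu_{\tau\si\tau^{-1},\tau}\qquad\text{for all }\si,\tau\in\sH.
\]

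Next I would exploit that $\sH$ is abelian. Conjugation is trivial, so $\tau\si\tau^{-1}=\si$, and the $\sH$-action on the coefficient field $\gK=\bF$ is also trivial. The invariance condition collapses to $a_\si(\mu_{\tau,\si}-\mu_{\si,\tau})=0$ for every $\tau\in\sH$, which holds precisely when either $a_\si=0$ or $\si\in\sH_0$. Hence
\[
 \kZ(\bD\sH)=\bop_{\si\in\sH_0}\gK[\si]=\gK\sH_0,
\]
establishing the first assertion of the corollary.

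For the final count $\nu_\sG(X)=\#(\sH_0)$, I would recall that $\nu_\sG(X)$ equals the number of simple components of $\bD\sH$, which in turn equals the number of simple components of its centre. Since $\bD\sH$ is semisimple by Proposition \ref{p41}, the centre $\kZ(\bD\sH)$ is a commutative semisimple $\gK$-algebra of dimension $\#\sH_0$. Under the algebraically closed case highlighted parenthetically by the statement (where $\bF=\gK$ is automatic), any such algebra must be isomorphic to $\gK^{\#\sH_0}$, yielding $\nu_\sG(X)=\#(\sH_0)$.

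The main technical point is precisely this last step: strictly speaking, one needs $\kZ(\bD\sH)$ to split as a product of $\#\sH_0$ copies of $\gK$, not merely to have dimension $\#\sH_0$ over $\gK$. This is immediate under algebraic closedness of $\gK$; more generally it would require observing that $\mu|_{\sH_0\times\sH_0}$ is a symmetric $2$-cocycle of the abelian group $\sH_0$ with values in $\gK^\times$, and checking that the resulting commutative twisted group algebra is split étale over $\gK$, so that simple components can be counted by dimension.
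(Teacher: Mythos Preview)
Your argument follows exactly the route of the paper: reduce to $\sN=\sH$, apply Proposition~\ref{p43} with the abelian simplifications to obtain $\kZ(\bD\sH)=\gK\sH_0$, and then count simple components of this semisimple commutative centre. Your caveat about the final splitting step is well taken---the paper simply asserts that the commutative semisimple algebra $\gK\sH_0$ is isomorphic to $\gK^m$ without further comment, which is unproblematic precisely in the algebraically closed case you single out.
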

 \begin{proof}
 In this case $\sN=\sH$, so the center of $\bD\sH$ coincides with $\gK\sH_0$
 (one easily checks that $\sH_0$ is indeed a subgroup). Since the latter algebra
 is commutative  and semisimple, it is isomorphic to $\gK^m$, where $m=\#(\sH_0)$,
 therefore, the number of its simple components equals $m$.
 \end{proof}
 
 \begin{corol}\label{c45}
  If $\bF=\gK$ and the group $\sH$ is cyclic, the center of the algebra $\bD\sH$
 coincides with $\gK\sH$ and $\nu_\sG(X)=\#(\sH)$.
 \end{corol}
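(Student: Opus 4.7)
The plan is to derive this corollary directly from Corollary~\ref{c44}. Since a cyclic group is abelian, Corollary~\ref{c44} applies, giving $\kZ(\bD\sH)=\gK\sH_0$ and $\nu_\sG(X)=\#(\sH_0)$; it therefore suffices to prove $\sH_0=\sH$, i.e.\ that $\mu_{\si,\tau}=\mu_{\tau,\si}$ for all $\si,\tau\in\sH$.

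Fix a generator $g$ of $\sH$. The key idea is that inside $\bD\sH$ every basis element $[g^i]$ differs from the power $[g]^i$ only by a scalar, which reduces commutativity of the $[g^i]$ among themselves to the self-commutation of powers of the single element $[g]$. Using \eqref{e23} together with the already-recorded fact that $\mu_{\si,\tau}\in\bF=\gK$ whenever $\si,\tau\in\sH$, a short induction gives
\[
 [g]^i=c_i\,[g^i],\qquad c_i=\prod_{k=1}^{i-1}\mu_{g,g^k}\in\gK.
\]
Since the $c_i$ lie in the commutative central field $\gK$ they commute freely with everything, so
\[
 [g^i][g^j]=c_i^{-1}c_j^{-1}[g]^{i+j}=c_j^{-1}c_i^{-1}[g]^{i+j}=[g^j][g^i].
\]
On the other hand \eqref{e23} gives $[g^i][g^j]=\mu_{g^i,g^j}[g^{i+j}]$ and $[g^j][g^i]=\mu_{g^j,g^i}[g^{i+j}]$, and comparing the two computations forces $\mu_{g^i,g^j}=\mu_{g^j,g^i}$. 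Hence $g^i\in\sH_0$ for every $i$, so $\sH_0=\sH$, and the claim follows.

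The only point that requires any care is to check that the $c_i$ genuinely lie in $\gK$ rather than in an arbitrary chunk of $\bD$; this is immediate from the observation (recorded just before Proposition~\ref{p43}) that $\mu_{\si,\tau}\in\bF$ whenever $\si,\tau\in\sH$, combined with the hypothesis $\bF=\gK$. The cyclic hypothesis is used only to isolate a single generator whose own powers tautologically commute, which is exactly what makes the scalar comparison work.
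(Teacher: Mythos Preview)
Your proof is correct and follows essentially the same approach as the paper: both reduce to Corollary~\ref{c44} and then argue that $\sH_0=\sH$ by showing $\mu_{\si,\tau}=\mu_{\tau,\si}$ for all $\si,\tau$ in the cyclic group $\sH$. The only difference is that the paper simply cites this symmetry of $\mu$ as ``well-known,'' whereas you supply an explicit argument via $[g]^i=c_i[g^i]$ inside the crossed product; your version is thus a fleshed-out variant of the paper's one-line proof.
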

 \begin{proof}
 Actually, in this case it is well-known that $\mu_{\si,\tau}=\mu_{\tau,\si}$
 for all $\si,\tau\in\sH$. 
 \end{proof}

 Note that all these corollaries hold if the group $\sG$ itself is abelian or cyclic. 

 If $\bK$-category $\kA$ is piecewise finite, so is every bimodule category $\el(\dT)$ 
 as well, where $\dT=(\kA,\kB,\dd)$. If a group $\sG$ acts separably on the triple $\dT$,
 it acts separably on the category $\el(\dT)$ as well, and, according to Theorem \ref{t34},
 $\add\el(\dT)\sG\simeq\el(\dT\sG)$, this equivalence being induced by the functor 
 $\Phi:x\mapsto x[1]$. Therefore, all the results above can be applied to the study of the
 decomposition of an element $x[1]$ in the category $\el(\dT\sG)$. We only quote explicitly
 the reformulations of Corollaries \ref{c44} and \ref{c45} for this case. 

 \begin{corol}\label{c46}
 Let the residue field $\gK$ be algebraically closed and the group $\sH=\setsuch{\si}{x^\si\simeq x}$
 be abelian. Choose isomorphisms $\phi_\si:x^\si\to x$ for every element $\si\in\sH$ and denote
 by $\mu_{\si,\tau}$ the image of a morphism $\phi_\si\phi_\tau\la_{\si,\tau}\phi_{\si\tau}^{-1}$
 in $\gK\simeq\Hom_\dT(x,x)/\rad_\dT(x,x)$. Then the number of non-isomorphic indecomposable
 direct summands in the decomposition of the object $x[1]$ in the category $\el(\dT\sG)$ 
 equals the order of the group $\sH_0=\setsuch{\si}{\LA\tau\,\mu_{\si,\tau}=\mu_{\tau,\si}}$. 
 Especially, if the group $\sH$ is cyclic, this number equals the order of $\sH$. 
 \end{corol}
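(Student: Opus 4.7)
The plan is to reduce the statement to Corollary \ref{c44} (respectively \ref{c45}) applied not to $\dT$ but to the category $\el(\dT)$ equipped with the induced $\sG$-action $T_*$ and system of factors $\la_*$. By Theorem \ref{t34}, separability of the action on $\dT$ (which carries over to $\el(\dT)$) yields an equivalence $\add\el(\dT)\sG\ito\el(\dT\sG)$ induced by $\Phi$, so that the object $x[1]\in\el(\dT\sG)$ corresponds to $\Phi(x)=x[1]\in\el(\dT)\sG$. Both categories are piecewise finite over the noetherian local henselian ring $\bK$, hence Krull--Schmidt, so the number of non-isomorphic indecomposable summands is an intrinsic invariant and agrees on the two sides. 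For the action $T_*$ the relevant subgroup is $\sH=\setsuch{\si\in\sG}{x^\si\simeq x}$, which matches the $\sH$ in the corollary.

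Next I would unwind the general setup preceding Proposition \ref{p43} in this special case. Put $\bA=\Hom_\dT(x,x)$, so $\bD=\bA/\rad_\dT(x,x)$ is a finite-dimensional skewfield over $\gK$; since $\gK$ is algebraically closed, $\bD=\gK$ and its center is $\bF=\gK$. Every $\gK$-algebra automorphism of $\bD$ is then trivial, so the normal subgroup $\sN=\setsuch{\si\in\sH}{D_\si|_\bF=\id}$ coincides with all of $\sH$, and one may choose $d_\rho=1$ for every $\rho\in\sN$. Under these choices the factor system $\mu_{\si,\tau}$ constructed in the text collapses to the image in $\gK$ of $\la'_{\si,\tau}=\phi_\si\phi^\si_\tau\la_{\si,\tau}\phi_{\si\tau}^{-1}$, which is precisely the element denoted $\mu_{\si,\tau}$ in the statement (the expression $\phi_\si\phi_\tau$ there being read as $\phi_\si\phi^\si_\tau$, as needed for the composition to be defined).

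Since $\sH$ is abelian and $\bF=\gK$, Corollary \ref{c44} now applies verbatim to give $\nu_\sG(x)=\#(\sH_0)$ with $\sH_0$ as described, and transport along $\Phi$ yields the same count for $x[1]\in\el(\dT\sG)$; the cyclic case follows at once from Corollary \ref{c45}. The only real obstacle is the notational bookkeeping linking the general factor system $\mu_{\si,\tau}$ from the construction preceding Proposition \ref{p43} with the simpler one displayed in the corollary. Once one observes that $\bD=\gK$ eliminates the twisting by the elements $d_\rho$, the two versions of $\mu_{\si,\tau}$ coincide and the rest is a direct invocation of the two corollaries.
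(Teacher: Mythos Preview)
Your proposal is correct and follows exactly the approach the paper itself takes: the paragraph preceding Corollary~\ref{c46} explains that Theorem~\ref{t34} transports the question to $\el(\dT)\sG$, whereupon Corollaries~\ref{c44} and~\ref{c45} apply directly, and Corollary~\ref{c46} is stated simply as the resulting reformulation without a separate proof. You have in fact supplied more detail than the paper does, including the observation that $\bD=\gK$ forces $\sN=\sH$ and trivializes the $d_\rho$, and you correctly note that $\phi_\si\phi_\tau$ in the statement must be read as $\phi_\si\phi^\si_\tau$ for the composite to make sense.
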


 \begin{rmk} %here
 It is evident that all these statements also hold if separable is the action of the group $\sH$
 on the skewfield $\bD$, or, equivalently, on its center $\bF$. It is known \cite[Section 4.18]{jac}
 that one only has to verify that separable is the action of the subgroup $\sN$, i.e. that
 $\chr\gK\nmid\#(\sN)$, since the action of $\sN$ on $\bF$ is trivial.
 \end{rmk}
    
 Proposition \ref{p41} evidently implies some more corollaries concerning the structure
 of the radical of the category $\kA\sG$ (for instance, bimodule category $\el(\dT\sG)\hskip1pt$).

 \begin{corol}\label{c47}
 Let the action of the group $\sG$ is separable. If a set of morphisms $\set{a_i}$ is a set of
 generators of the $\kA$-module $(\rad\kA)(X,\_)$ (or $\kA\op$-module $(\rad\kA)(\_\,,X)\,$), 
 its image $\set{a_i[1]}$ in $\kA\sG$ is a set of generators of the $\kA\sG$-module
 $(\rad\kA\sG)(X,\_)$ (respectively, $\kA\op$-module $(\rad\kA\sG)(\_\,,X)\hskip1pt$).
 \end{corol}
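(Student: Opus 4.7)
The plan is to combine Proposition \ref{p41}, which under separability identifies $\rad(\kA\sG)$ with $(\rad\kA)\sG$, with a direct calculation using the crossed multiplication rule. By Proposition \ref{p41} and the very construction of $(\rad\kA)\sG$, one has
\[
 (\rad\kA\sG)(X,Y)=\bop_{\si\in\sG}(\rad\kA)(X^\si,Y)\cdot[\si],
\]
so it is enough to show that each ``monomial'' $d[\si]$ with $d\in(\rad\kA)(X^\si,Y)$ lies in the $\kA\sG$-submodule generated by the images $a_i[1]$.

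For the left-module statement I proceed as follows. Since $\rad\kA$ is $\sG$-invariant and each $T_\si$ is an equivalence of $\kA$, applying $T_\si$ carries a set of generators of the $\kA$-module $(\rad\kA)(X,\_)$ to a set of generators of the $\kA$-module $(\rad\kA)(X^\si,\_)$. Hence any $d\in(\rad\kA)(X^\si,Y)$ admits a decomposition $d=\sum_j c_j(a_{i_j})^\si$ with $a_{i_j}:X\to X_{i_j}$ and $c_j\in\kA(X_{i_j}^\si,Y)$. Using $\la_{\si,1}=1$, the multiplication rule \eqref{e23} then yields
\[
 \sum_j c_j[\si]\cdot a_{i_j}[1]=\sum_j c_j(a_{i_j})^\si[\si]=d[\si],
\]
which is precisely what was wanted.

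The right-module statement is even more transparent: given $d\in(\rad\kA)(Y^\si,X)$, the original generation hypothesis already provides $d=\sum_j a_{i_j}b_j$ with $a_{i_j}:X_{i_j}\to X$ and $b_j\in\kA(Y^\si,X_{i_j})$, and since $\la_{1,\si}=1$ the product $a_{i_j}[1]\cdot b_j[\si]$ equals $a_{i_j}b_j[\si]$, so $d[\si]=\sum_j a_{i_j}[1]\cdot b_j[\si]$. I do not anticipate any serious obstacle here: once Proposition \ref{p41} and the $\sG$-invariance of $\rad\kA$ are in hand, the argument is a routine bookkeeping exercise with \eqref{e23}, the only point worth noting being the vanishing of the cocycle $\la$ at a trivial argument.
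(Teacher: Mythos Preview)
Your argument is correct and follows exactly the route the paper has in mind: the paper states Corollary~\ref{c47} as an ``evident'' consequence of Proposition~\ref{p41} and gives no further proof, so your write-up simply fills in the routine details (the identification $\rad(\kA\sG)=(\rad\kA)\sG$, the $\sG$-invariance of $\rad\kA$, and the multiplication rule~\eqref{e23} with $\la_{\si,1}=\la_{1,\si}=1$). Nothing to change.
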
 

 We call a morphism $a:Y\to X$ \emph{left almost split} (respectively, \emph{right almost split})
 if it generates the $\kA$-module $(\rad\kA)(\_\,,X)$ (respectively, $\kA\op$-module $(\rad\kA)(Y,\_)\,$),
 and an equality $a=bf$ implies that the morphism $f$ is left invertible, or, the same, is a
 split epimorphism (respectively, the equality $a=fb$ implies that $g$ is right invertible, or, the same, 
 is a split monomorphism).%
 \footnote{\ In the book \cite{ars} one only uses these notions in  the case when $X$ (respectively,
 $Y$) is indecomposable. However, one can easily see that a left (right) almost split morphism in
 our sense is just a direct sum of those in  the sense of \cite{ars}. The same also concerns the
 notion of the \emph{almost split sequences} used below.}

 \begin{corol}\label{c48}
 Let the action of $\sG$ is separable. If a morphism $a:Y\to X$ is left (right)
 almost split, so is $a[1]$ as well.
 \end{corol}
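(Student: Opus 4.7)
The proof splits into two parts: that $a[1]$ generates the appropriate radical, and that $a[1]$ satisfies the minimality clause on factorizations. The first part is immediate from Corollary~\ref{c47} applied to the singleton $\{a\}$, so my focus is the minimality clause; I treat the left almost split case in detail, with the right case handled symmetrically.

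Suppose $a[1]=b\al$ in $\kA\sG$ with $b\in\kA\sG(Z,X)$ and $\al\in\kA\sG(Y,Z)$, and write $\al=\sum_\si\al_\si[\si]$, $b=\sum_\tau b_\tau[\tau]$. Comparing the $[1]$-coefficients of both sides via~\eqref{e23} yields the identity $a=\sum_\tau b_\tau\al_{\tau^{-1}}^\tau\la_{\tau,\tau^{-1}}$ in $\kA$, exhibiting $a$ as a factorization $a=b'\al'$ through $\bop_\tau Z^\tau$, where $\al'\colon Y\to\bop_\tau Z^\tau$ has $\tau$-component $\al_{\tau^{-1}}^\tau\la_{\tau,\tau^{-1}}$ and $b'=(b_\tau)_\tau$. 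By the minimality clause for $a$ in $\kA$, the morphism $\al'$ must be left invertible in $\kA$, yielding $\ga\colon\bop_\tau Z^\tau\to Y$ in $\kA$ with $\al'\ga=1$.

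The key observation is that $\al'$ is precisely the image of $\al$ under the hom-adjunction $\kA\sG(Y,Z)\simeq\kA(Y,\Psi Z)$ coming from (the trivial-bimodule specialization of) Proposition~\ref{p22}, where $\Psi Z=\bop_\tau Z^\tau$; consequently the counit of this adjunction produces the decomposition $\al=\pi_Z\cdot\al'[1]$ in $\kA\sG$, with $\pi_Z\colon\bop_\tau Z^\tau\to Z$ the morphism from the proof of Theorem~\ref{t34} admitting a section $\io_Z\colon Z\to\bop_\tau Z^\tau$ in $\kA\sG$ satisfying $\pi_Z\io_Z=1_Z$. Setting $\be:=\ga[1]\cdot\io_Z\in\kA\sG(Z,Y)$, one computes
\[
\al\be \;=\; \pi_Z\cdot\al'[1]\cdot\ga[1]\cdot\io_Z \;=\; \pi_Z\cdot(\al'\ga)[1]\cdot\io_Z \;=\; \pi_Z\io_Z \;=\; 1_Z,
\]
exhibiting $\al$ as split epi in $\kA\sG$. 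The right almost split case is handled dually: one extracts the identical $\kA$-factorization $a=f'\al'$ from $a[1]=f\al$, applies right minimality in $\kA$ to conclude that $\al'$ is split mono, and uses the mirror decomposition $\al=g(\al)[1]\cdot\io'_Y$ from the opposite adjunction $\Psi\dashv\iota$ together with a trace-twisted section $\io'_Y\colon Y\to\bop_\si Y^\si$ in $\kA\sG$ to construct a retraction of $\al$. The main potential obstacle for the right case is that the natural unit of $\Psi\dashv\iota$ is not itself a section in $\kA\sG$: it must be rescaled using the trace element $\al_0\in\kZ(\kA)$ with $\tr\al_0=1_\kA$ supplied by separability, and verifying this rescaling is routine but requires the system-of-factors relations~\eqref{e21}--\eqref{e22}.
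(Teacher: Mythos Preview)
The paper gives no proof for this corollary; it is one of several listed as ``evidently implied'' by Proposition~\ref{p41}, which identifies $\rad(\kA\sG)$ with $(\rad\kA)\sG$. Your treatment of the generation clause via Corollary~\ref{c47} is exactly what is intended.

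For the minimality clause you supply substantially more than the paper does. Your route---reading off a $\kA$-factorization of $a$ through $\bop_\tau Z^\tau$ from the $[1]$-component of $a[1]=b\al$, invoking minimality of $a$ in $\kA$, and then lifting the resulting splitting back to $\kA\sG$ via the counit $\pi_Z$ and its separability-provided section $\io_Z$ from the proof of Theorem~\ref{t34}---is correct and gives a self-contained constructive argument. The identity $\al=\pi_Z\cdot\al'[1]$ is indeed the counit formula for the adjunction between the embedding $\kA\hookrightarrow\kA\sG$ and $Z\mapsto\bop_\tau Z^\tau$; strictly speaking Proposition~\ref{p22} is stated for bimodule categories rather than for $\kA$ itself, but the underlying-category version is the same computation and can in any case be checked directly from the multiplication rule~\eqref{e23} and the relations $\la^{\si}_{\si^{-1},\si}=\la_{\si,\si^{-1}}$. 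Your sketch of the right case is sound: the rescaling of the unit by the trace element is precisely where separability enters on that side, mirroring how $\io_Z$ uses it on the left. What you gain over the paper's one-word ``evident'' is an explicit left (respectively right) inverse for $\al$, rather than an appeal to general Krull--Schmidt bookkeeping.
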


 A sequence $X\xarr{a} Y\xarr{b} X'$ is called \emph{almost split} if the morphism
 $a$ is left almost split, the morphism $b$ is right almost split and, besides,
 $a=\Ker b$ and $b=\cok a$, i.e.,  for every object $Z$, the induced sequences 
 of groups
 \begin{align*}
  0&\to \kA(Z,X)\to \kA(Z,Y)\to \kA(Z,X'),\\
  0&\to \kA(X',Z)\to \kA(Y,Z)\to \kA(X,Z)
 \end{align*}
  are exact.

 \begin{corol}\label{c49}
  Let the action of $\sG$ is separable. If a sequence $X\xarr{a} Y\xarr{b} X'$ is almost split
 in the category $\kA$, the sequence $X\xarr{a[1]} Y\xarr{b[1]} X'$ is almost split in the
 category $\kA\sG$.
 \end{corol}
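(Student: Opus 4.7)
The plan is to combine Corollary \ref{c48} with a direct verification of the kernel and cokernel conditions via the group-indexed decomposition of morphism spaces in $\kA\sG$. By Corollary \ref{c48}, $a[1]$ is left almost split and $b[1]$ is right almost split in $\kA\sG$, so only the exactness of the induced sequences
\[
0\to\kA\sG(Z,X)\to\kA\sG(Z,Y)\to\kA\sG(Z,X')
\]
and
\[
0\to\kA\sG(X',Z)\to\kA\sG(Y,Z)\to\kA\sG(X,Z)
\]
remains to be verified for every object $Z$ of $\kA$.

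The key observation is that every morphism space in $\kA\sG$ decomposes as a $\bK$-module
\[
\kA\sG(Z,W)=\bop_{\si\in\sG}\kA(Z^\si,W)[\si].
\]
Using the product rule from Section \ref{s2} together with $\la_{1,\si}=\la_{\si,1}=1$ and $T_1=\id_\dT$, left multiplication by $a[1]$ acts as $c_\si[\si]\mapsto(ac_\si)[\si]$, while right multiplication by $a[1]$ acts as $c_\si[\si]\mapsto(c_\si a^\si)[\si]$, and likewise for $b[1]$. Consequently the two sequences above decompose as direct sums, indexed by $\si\in\sG$, of the sequences
\[
0\to\kA(Z^\si,X)\xarr{a\cdot}\kA(Z^\si,Y)\xarr{b\cdot}\kA(Z^\si,X')
\]
and
\[
0\to\kA((X')^\si,Z)\xarr{\cdot b^\si}\kA(Y^\si,Z)\xarr{\cdot a^\si}\kA(X^\si,Z).
\]
Each of these is exact: the first because $a=\Ker b$ and $b=\cok a$ in $\kA$, the second because each $T_\si$ is an equivalence of bimodule triples and so carries the almost split sequence $X\xarr{a}Y\xarr{b}X'$ to the almost split sequence $X^\si\xarr{a^\si}Y^\si\xarr{b^\si}(X')^\si$, in which $a^\si=\Ker b^\si$ and $b^\si=\cok a^\si$.

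There is no substantive obstacle beyond careful tracking of the group indices and the use of the normalizations $\la_{1,\si}=\la_{\si,1}=1$; the whole content of the statement beyond Corollary \ref{c48} is the elementary observation that exactness in $\kA\sG$ of a sequence built from morphisms of the form $f[1]$ reduces componentwise to exactness of the $\sG$-translates of the original sequence in $\kA$, which remain almost split because the $T_\si$ are equivalences.
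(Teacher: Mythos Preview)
Your proof is correct and matches the paper's intended argument: the paper states the corollary without proof, treating it as an evident consequence of Proposition~\ref{p41} and Corollary~\ref{c48}, and your write-up is precisely a careful unpacking of that evidence. One small remark: your verification of the kernel/cokernel conditions nowhere uses separability, which is as it should be---separability enters only through Corollary~\ref{c48} to secure the left/right almost split properties, while the exactness of the $\Hom$-sequences follows purely from the $\sigma$-graded decomposition of $\kA\sG(Z,W)$ and the fact that each $T_\si$ is an autoequivalence.
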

 
 Since, under the separability condition, every object from $\add\kA\sG$ is a direct summand of
 an object that has come from the category $\kA$, Corollaries \ref{c48} and \ref{c49} describe
 almost split morphisms and sequences in the category $\add\kA\sG$ as soon as they are known
 in the category $\kA$. In particular, these results can be applied to the bimodule categories
 $\el(\dT\sG)$ due to Theorem \ref{t34}.

 \end{document}